\newcommand{\res}{\upharpoonright}
\newcommand{\Cbb}{\mathbb{C}}
\newcommand{\C}{\Cbb}
\newcommand\Nbb{\mathbb{N}}
\newcommand\Sbb{\mathbb{S}}
\newcommand\Tbb{\mathbb{T}}
\newcommand\Zbb{\mathbb{Z}}
\newcommand\aut{\operatorname{Aut}}
\newcommand{\Seq}[1]{\langle #1 \rangle}
\newcommand\supt[1]{[ #1 ]}
\theoremstyle{plain}
\newtheorem{theorem}{Theorem}
\newtheorem{lemma}{Lemma}
\theoremstyle{remark}
\newtheorem{claim}{Claim}
\newcommand{\od}{\overline{\bigodot}}
\newcommand{\cl}{\operatorname{cl}}
\begin{document}

\title[Spectral form of Koopman representations]{The spectral form of Koopman representations of 
the group of measurable functions\\ with values in the circle}

\author{Justin Tatch Moore \and S{\l}awomir Solecki}

\thanks{Moore's research supported by NSF grant DMS-1854367.}

\thanks{Solecki's research supported by NSF grant DMS-1954069.}

%\date{July 24, 2021}

\address{Department of Mathematics\\
Cornell University\\
Ithaca, NY 14853}

\email{justin@math.cornell.edu}

\email{ssolecki@cornell.edu}

\subjclass[2000]{22A25, 22F10, 37A15}  

\keywords{Groups of measurable functions, unitary representations}
%, Koopman representations}

\begin{abstract}
We compute the spectral form of the Koopman representation induced by a natural boolean action of $L^0(\lambda, \Tbb)$ identified earlier by the authors.
Our computation establishes the sharpness
of the constraints on spectral forms of Koopman representations of $L^0(\lambda, \Tbb)$ 
previously found by the second author. 
\end{abstract}

\maketitle

\section{Introduction} 

Broadly speaking, this paper is about unitary representations of a specific non-locally compact topological group:
$L^0(\lambda, \Tbb)$.
This group consists of all $\lambda$-measurable functions 
from $2^\Nbb$ to $\Tbb$
equipped with the operation of point-wise multiplication and the topology of convergence in measure.
Here $\lambda$ is the countable power of the uniform measure on $\{0,1\}$
and $\Tbb := \{z \in \Cbb \mid |z| = 1\}$. 

The group $L^0(\lambda, \Tbb)$ 
is an important example in topological dynamics --- it is both an extremely amenable group 
\cite{Gla} and the simplest example of a Polish group whose boolean ergodic actions are not point realized \cite{GW}. 
The underlying cause of both phenomena is 
it being a L{\'e}vy group.
It occurs naturally as the closed group generated by 
a generic unitary operator \cite{MT} and as the closed group generated by Gaussian transformations \cite{LPT}.

The main theorem of \cite{Sol} shows that 
all unitary representations of $L^0(\lambda, \Tbb)$ have a specific form, which we 
call the {\em spectral form of the representation}; see Section \ref{Su:spfr} below.
Then in \cite{Sol2} a new constraint on the 
spectral forms of {\em Koopman} representations of $L^0(\lambda, \Tbb)$ was established;
see Section \ref{Su:sem} below.
This is in contrast to the spectral forms of {\em arbitrary}
unitary representations of $L^0(\lambda, \Tbb)$.
This additional constraint was the key ingredient in proving  
that the closed group generated by a generic measure preserving transformation 
is not isomorphic to $L^0(\lambda, \Tbb)$ \cite{Sol2}.
Up to this point, however,
there has been no nontrivial example of a concrete unitary representation of $L^0(\lambda, \Tbb)$
for which the spectral form 
has been computed; in fact, the spectral form has not be computed for any Koopman representation associated 
with a boolean ergodic action of $L^0(\lambda, {\mathbb T})$.

In the present paper, we consider a natural boolean ergodic action 
of $L^0(\lambda, \Tbb)$ defined in \cite{MoSo}
in connection with the problem of finding point realizations of boolean actions of the
subgroup $C(2^{\Nbb},\Tbb)$. 
We recall the definition of this action in Section~\ref{Su:act}. 
Theorem~\ref{T:koo} 
of Section~\ref{Su:act2} explicitly computes the spectral form of the Koopman representation induced by this action.
This is the main result of the current paper.
Our Theorem~\ref{T:koo} shows that the constraint found in \cite{Sol2} is essentially strongest possible.
We discuss these issues in Section~\ref{Su:sem}.
Section~\ref{S:prtr} is devoted to proving Theorem~\ref{T:koo}.

\section{Background and the statement of the main theorem}

We fix a few notational conventions.
We will use $\Nbb$ to denote the positive integers.
All counting and indexing will start at $1$ except when otherwise specified.
The letters $i,j,k,l,m,n,p,q$ will be used to denote elements of $\Zbb$.
If $n \geq 0$, we will write $2^n$ to denote the collection of all binary sequences of length $n$.
If $s \in 2^n$, $s_i$ will denote the $i$th member of the sequence $s$, recalling our convention that indexing starts with $1$.
If $s,t \in 2^n$, $st$ will denote concatenation of $s$ and $t$.
The collection of all finite binary sequences, including the null sequence, will be denoted by $\Sigma$.
We will write $\Zbb^\times$ to denote the set of all nonzero integers, regarded as a semigroup with the operation
of multiplication.
Furthermore, $\Nbb[\Zbb^\times]$ will denote the collection of all functions whose domain is a non-empty
finite subset of $\Zbb^\times$ and whose values are in $\Nbb$.
Unless stated otherwise, $x$ and $y$ will denote elements of $\Nbb[\Zbb^\times]$.

\subsection{The spectral form of unitary representations of $L^0(\lambda, \Tbb)$}\label{Su:spfr} 

The following result from \cite{Sol} describes the form of an arbitrary unitary representation of $L^0(\lambda, \Tbb)$. 
We state the result first and then define the notation and terminology 
needed to understand its statement.

\begin{theorem}[\cite{Sol}]\label{T:rep} 
Let $\xi\colon L^0(\lambda, \Tbb)\to {\mathcal U}(H)$ be a unitary representation on a separable Hilbert space $H$. 
Let $H_0$ be the orthogonal complement of the subspace of $H$ consisting of vectors fixed by the representation. 
Then $\xi$ restricted to $H_0$ is isomorphic to the $\ell^2$-sum over $x\in \Nbb [\Zbb^\times]$ and 
$j\in \Nbb$ of the representations $\rho_{\mu^j_x}$ associated to 
a sequence of finite measures  $\mu^j_x$ indexed by $x\in \Nbb[\Zbb^\times]$ and $j \in \Nbb$
such that, for each $x$ and $j$:
\begin{itemize}

\item
$\mu_x^j$ is compatible with $x$;

\item $\mu^{j+1}_x\preceq \mu^j_x$. 

\end{itemize}
The measures $\mu^j_x$ are uniquely determined up to mutual absolute continuity. 
\end{theorem}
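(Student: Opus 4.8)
The plan is to exploit the dyadic self-similar structure of $L^0(\lambda,\Tbb)$ and reduce the problem to spectral multiplicity theory for an abelian von Neumann algebra, with continuity in measure supplying the decisive constraint that forces the combinatorial index set $\Nbb[\Zbb^\times]$. For each $n$ let $G_n\subseteq L^0(\lambda,\Tbb)$ be the subgroup of functions that are constant on every level-$n$ dyadic cylinder $[s]$, $s\in 2^n$; then $G_n\cong\Tbb^{2^n}$, the $G_n$ form an increasing chain with dense union, and the inclusion $G_n\hookrightarrow G_{n+1}$ is induced by the splitting of each cylinder $[s]$ into $[s0]$ and $[s1]$. Since $\xi$ is continuous and $\bigcup_n G_n$ is dense, $\xi$ is completely determined by its restrictions $\xi\res G_n$ together with the coherence imposed by these inclusions. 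First I would apply Pontryagin duality (the SNAG theorem) to each $\xi\res G_n$, producing a projection-valued measure $E_n$ on the discrete dual $\widehat{G_n}\cong\Zbb^{2^n}$; a character $k\in\Zbb^{2^n}$ records, for each cylinder $[s]$, an exponent $k_s\in\Zbb$ of the corresponding coordinate circle.

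The heart of the argument is to understand how the $E_n$ cohere and what continuity in measure forces. Dually, $G_n\hookrightarrow G_{n+1}$ corresponds to the restriction map $\Zbb^{2^{n+1}}\to\Zbb^{2^n}$ sending $(k_t)_t$ to $(k_{s0}+k_{s1})_s$, so that $E_n$ is the pushforward of $E_{n+1}$ and the $E_n$ assemble into a projective system of projection-valued measures. I would then show that continuity of $\xi$ in the measure topology is incompatible with characters whose pattern of nonzero exponents is not stable under refinement: elements of small-measure neighborhoods of the identity must act nearly trivially, which forces every surviving spectral atom to be described by only finitely many nonzero exponents, supported on cylinders of small total measure. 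Encoding those exponents with their multiplicities yields an element $x\in\Nbb[\Zbb^\times]$, while the locations of the exponents in $\Sigma$, and in the limit in $2^\Nbb$, are recorded by a point of a configuration-type space attached to $x$. This passage from the finite-torus spectral data to the statement that the limiting spectrum is carried by the spaces indexed by $\Nbb[\Zbb^\times]$ is the main obstacle; it is where the special geometry of convergence in measure, as opposed to a product or uniform topology, enters in an essential way.

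Granting this, I would assemble the building blocks $\rho_{\mu_x^j}$ as follows. Fixing a type $x$, the admissible locations form a standard measure space, and the part of $\xi$ of type $x$ becomes a direct integral over this space governed by a single finite measure $\mu_x$; the requirement that $\mu_x$ be \emph{compatible with} $x$ merely records that it is carried by the configuration space matching $x$. Decomposing this direct integral by multiplicity, via the Hahn--Hellinger procedure, splits $\mu_x$ into a decreasing chain of measure classes $\mu_x^1\succeq\mu_x^2\succeq\cdots$, which accounts for the auxiliary index $j\in\Nbb$ and for the domination condition $\mu_x^{j+1}\preceq\mu_x^j$. Taking the $\ell^2$-sum over all $x$ and $j$, and isolating the subspace of fixed vectors as $H\ominus H_0$, reproduces the asserted form of $\xi\res H_0$.

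Finally, uniqueness up to mutual absolute continuity is exactly the uniqueness clause of spectral multiplicity theory: the abelian von Neumann algebra generated by $\xi(L^0(\lambda,\Tbb))$ determines its spectral measure class and its multiplicity function canonically, and the decomposition above is just a reading of these invariants against the fixed coordinatization by $\Nbb[\Zbb^\times]$. I expect the genuinely delicate points to be concentrated in the second paragraph, namely proving that continuity in measure annihilates all characters except those of finite, refinement-stable support and that the limiting direct integral is legitimate in this non-locally-compact setting, rather than in the bookkeeping of the final decomposition.
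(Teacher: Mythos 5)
First, a caveat: the paper you are working from does not prove this statement at all --- it is Theorem~\ref{T:rep}, imported verbatim from \cite{Sol} as background, so there is no in-paper proof to measure your attempt against. Judged on its own terms, your outline does identify the correct strategy (and, as far as the architecture goes, essentially the one used in \cite{Sol}): exhaust $L^0(\lambda,\Tbb)$ by the tori $\Sbb_n\cong\Tbb^{2^n}$, apply SNAG to get projection-valued measures on $\Zbb^{2^n}$ coherent under the dual maps $(k_t)_{t\in 2^{n+1}}\mapsto(k_{s0}+k_{s1})_{s\in 2^n}$, and argue that continuity in measure confines the spectrum to coherent sequences that are eventually carried by finitely many branches $a_1,\dots,a_r\in 2^{\Nbb}$ with stabilized nonzero exponents $c_1,\dots,c_r$, whence $x\in\Nbb[\Zbb^\times]$ records the multiset of exponents and $C_x$ records the branch locations. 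But the proposal has a genuine gap exactly where you flag ``the main obstacle'': the claim that continuity in the measure topology annihilates all other characters is the entire content of the theorem, and nothing in your sketch indicates how to run that estimate (why must $\sum_s|k^n_s|$ stabilize rather than merely the support shrink? why can no spectral mass accumulate on coherent sequences with unboundedly many branches of small measure?). A plan that defers this step has not yet engaged with the theorem.

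A second, less visible gap concerns compatibility and uniqueness. You write that compatibility of $\mu_x$ with $x$ ``merely records that it is carried by the configuration space matching $x$,'' but compatibility is three separate conditions, none of which is bookkeeping: (i) the marginals $(\pi_{k,i})_*\mu_x$ must be absolutely continuous with respect to $\lambda$, which is an analytic consequence of continuity of $\xi$ that has to be extracted from the same estimate as above; (ii) $\mu_x$ must be invariant under good permutations, which reflects that the branches carrying equal exponents are unordered and requires a symmetrization in your direct-integral construction (note the building block acts on $\widetilde{L^2}(\mu_x)$, not $L^2(\mu_x)$); and (iii) the diagonals $\{\pi_{k,i}=\pi_{k',i'}\}$ must be null, which is precisely what makes the decomposition by $x$ well defined --- a configuration with two coincident branches of exponents $c$ and $c'$ induces the same character as a single branch of exponent $c+c'$, i.e.\ it belongs to a different $x$. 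Without (ii) and (iii) the ``uniqueness up to mutual absolute continuity'' clause is false, so appealing to the uniqueness part of Hahn--Hellinger theory alone, as your last paragraph does, cannot close the argument.
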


The $\ell^2$-sum satisfying the conclusion of Theorem~\ref{T:rep} is called the {\bf spectral form of the representation} $\xi$.
The spectral form determines the representation $\xi$ on $H_0$ up to isomorphism.
Moreover, the spectral form is determined by the sequence of measures 
$(\mu^j_x)_{x,j}$,
which we call the {\bf sequence of measures determining the spectral form}.

We now turn to the task of setting up the terminology and notation used in
Theorem \ref{T:rep}. 
For $x\in \Nbb[\Zbb^\times]$, we write 
\begin{align*}
D(x) := & \{ (k, i)\mid k\in {\rm dom}(x),\, 0\leq i<x(k)\} \\
C_x := & (2^\Nbb)^{D(x)}.
\end{align*}
Define $\pi_{k, i}\colon C_x \to 2^\Nbb$, for $(k, i)\in D(x)$, to be the projection 
from $C_x$ onto coordinate $(k,i)$.

For $x\in \Nbb[\Zbb^\times]$, 
a permutation $\delta$ of $D(x)$ is called {\bf good} if, for each $(k,i)\in D(x)$, 
$\delta(k,i)= (k,j)$ for some $j$. Each good permutation $\delta$ induces a homeomorphism $\widetilde \delta$ of 
$C_x= (2^\Nbb)^{D(x)}$ 
by permuting coordinates 
\begin{equation}\notag
\pi_{\delta(k,i)} \big( {\widetilde \delta}(\alpha)\big) = \pi_{k,i}(\alpha), \hbox{ for all } \alpha \in C_x \hbox{ and } (k,i)\in D(x).
\end{equation} 
We call such homeomorphisms $\widetilde \delta$ {\bf good homeomorphisms of $C_x$}.

Let $x \in \Nbb[\Zbb^\times]$ and let $\mu$ be a finite Borel measure on $C_x$. We say that $\mu$ is {\bf compatible with} $x$ if 
\begin{enumerate}[(i)]

\item \label{marginal}
the marginal measures $(\pi_{k,i})_*(\mu)$ of $\mu$ on $2^\Nbb$
are absolutely continuous with respect to $\lambda$ for each $(k,i) \in D(x)$; 

\item \label{good_inv}
$\mu$ is invariant under good homeomorphisms of $C_x$; 

\item \label{off_diag}
all sets of the form $\{ \alpha\in C_x\mid \pi_{k,i}(\alpha) = \pi_{k',i'}(\alpha)\}$, 
for distinct $(k,i), (k',i')\in D(x)$, 
have measure zero with respect to $\mu$. 

\end{enumerate}
Condition (\ref{marginal}) is needed for representations as in \eqref{E:pire} below to be well defined, 
while conditions (\ref{good_inv}) and (\ref{off_diag}) ensure uniqueness in Theorem~\ref{T:rep}.

Let $x\in \Nbb[\Zbb^\times]$ and fix a measure $\mu_x$ which is compatible
with $x$.
Define a homomorphism $R_x : L^0(\lambda,\Tbb) \to L^0(\mu_x,\Tbb)$ by
\begin{equation}\notag
R_x(\phi) := \prod_{(k,i)\in D(x)} (\phi\circ \pi_{k, i})^k. 
\end{equation}
Since $\phi$ is a measure class of functions with respect to $\lambda$ and $\mu_x$ is compatible with $x$,
$R_x(\phi)$ determines a measure class of functions 
with respect to $\mu_x$.

By \cite{Sol}, for all $\phi \in L^0(\lambda,\Tbb)$ the function $R_x(\phi)$
is invariant under all good homeomorphisms of $C_x$. 
Define $\widetilde{L^2}(\mu_x)$
to be the closed subspace of $L^2(\mu_x)$ consisting of all (equivalence classes of) 
functions invariant under good homeomorphisms of $C_x$. 
Since $R_x(\phi)$ is invariant under all good homeomorphisms of $C_x$, we have that
for all $f\in \widetilde{L^2}(\mu_x)$, 
the product $R_x(\phi) f$  is an element of $\widetilde{L^2}(\mu_x)$. 
Let 
$\rho_{\mu_x}(\phi)$ be the multiplication operator on $\widetilde{L^2}(\mu_x)$ given by
\begin{equation}\label{E:pire}
\rho_{\mu_x}(\phi)\big( f\big) = R_x(\phi) f,\; \hbox{ for }f \in \widetilde{L^2}(\mu_x).
\end{equation} 
It is easy to check that $\rho_{\mu_x}(\phi)$ 
is a unitary operator on $\widetilde{L^2}(\mu_x)$ and that 
\begin{equation}\notag
\rho_{\mu_x} \colon L^0(\lambda, \Tbb)\to {\mathcal U}\big( \widetilde{L^2}(\mu_x)\big)
\end{equation}
is a unitary representation of $L^0(\mu_x, \Tbb)$.

\subsection{The boolean action of $L^0(\lambda, \Tbb)$}\label{Su:act} 

We will now recall some notation and definitions from \cite{MoSo}.
We will use $\gamma$ to denote the probability measure on $\C$ such that the real and imaginary components of 
$\gamma$ are independent Gaussian random variables with mean $0$ and variance $1/2$---giving a probability density 
function of $(1/\pi) e^{-x^2 - y^2}$. Observe that $\gamma$ is invariant under the action of $\Tbb$ on $\C$.

Recall that if $k \geq 0$, $2^k$ denotes the set of all binary sequences of length $k$
and $\Sigma$ denotes the set of all finite binary sequences.
Define $X_k :=\C^{2^k}$ equipped with the product measure $\gamma_k$ obtained
by equipping each coordinate with $\gamma$.
Set
\[
X_\infty :=
\{f \in \C^{\Sigma} \mid \forall s \in \Sigma \ \ f(s) = \frac{f(s 0) + f(s 1)}{\sqrt{2}} \} 
\]
and for each $s \in \Sigma$, let $z_s\colon X_\infty\to \C$ denote the evaluation function $f \mapsto f(s)$.
The measure $\gamma_\infty$ on $X_\infty$ is the unique measure which, for each $k$, pushes forward to
 $\gamma_k$ via the restriction map $f \mapsto f \restriction 2^k$.
  
Observe that for each $k \geq 0$,
$\{z_s \mid s \in 2^k\}$ is both an orthonormal family in $L^2(\gamma_k)$ and a mutually independent
family of identically distributed random variables.
We also note that for any $n \geq 0$, $s,t \in 2^n$ and $k,m \geq 0$:
\begin{equation} \label{inner_prod}
\int_{X_\infty} z^k_s \bar z^m_t \ d \gamma_\infty = 
\begin{cases}
m! & \textrm{ if } s = t \textrm{ and } k = m \\
1 & \textrm{ if }  k=m = 0 \\
0  & \textrm{ otherwise}
\end{cases}
\end{equation}
When $k=m$ and $s=t$, this reduces to the calculation
\begin{align*}
\frac{1}{\pi} \int_{-\infty}^\infty\!\int_{-\infty}^\infty  (x^2 + y^2 )^m e^{- x^2 - y^2}\ dx\ dy & = 
\frac{1}{\pi} \int_0^{2\pi} d\theta \int_{0}^\infty r^{2m+1} e^{-r^2}\ dr \\
& = \int_0^\infty u^m e^{-u}\ du = m!.
\end{align*}
If $s=t$ but $k \ne m$, then the integral is a multiple of $\int_0^{2\pi} e^{(k-m)i \theta} \ d\theta = 0$ and if
$s \ne t$, the independence of the random variables $z_s$ and $z_t$ implies 
$$\int_{X_\infty} z^k_s \bar z^m_t \ d \gamma_\infty =
\int_{X_\infty} z^k_s \ d\gamma_\infty \cdot \int_{X_\infty} \bar z^m_s \ d\gamma_\infty$$
which can then be evaluated as in the previous cases with one or both exponents replaced by $0$.

Next we will recall the boolean action of $L^0(\lambda,\Tbb)$ on $(X_\infty,\gamma_\infty)$.
In order to do this, we need to make some preliminary definitions.
For a finite binary sequence $s\in 2^n$, set 
\begin{equation}\notag 
[s] := \{ a \in 2^\Nbb\mid a \res n = s\}. 
\end{equation} 
Given $n$, we write $\Sbb_n$ for the subgroup of $L^0(\lambda, \Tbb)$ consisting of all functions 
constant on each of the sets $[s]$ for $s\in 2^n$.
As a topological group $\Sbb_n$ is isomorphic to $\Tbb^{2^n}$. 
The obvious canonical isomorphism between these two groups allows us to 
view $g$ as a sequence $(g_s)_{s\in 2^n}$ with $g_s\in \Tbb$ and, at the same time, as 
a function on $2^\Nbb$ defined by the formula $g(x)=g_s$ for $x\in [s]$. 
Observe that 
\[
\Sbb := \bigcup_n \Sbb_n. 
\]
is a dense subgroup of $L^0(\lambda, \Tbb)$. 

The action of $\Sbb$ on $X_\infty$ is as follows.
If $g \in \Sbb_n$ and $f \in X_\infty$
\[
(g \cdot f)(s) :=
\begin{cases}
g_s \cdot f(s) & \textrm{ if } |s| \geq n \\
\frac{1}{\sqrt{2}^k} {\displaystyle \sum_{t \in 2^k}} (g \cdot f)(st) & \textrm{ if } |s|=n-k < n
\end{cases}
\] 
This formula defines an action of $\Sbb$ on the algebra of Borel subsets of $X_\infty$
modulo the $\gamma_\infty$-measure zero sets.
Such a {\bf boolean action} can be viewed as a continuous embedding of $\Sbb$ into
the group $\aut(\gamma_\infty)$ of measure preserving transformations of $\gamma_\infty$.
The embedding extends uniquely to a continuous embedding of $L^0(\lambda,\Tbb)$ into
$\aut(\gamma_\infty)$.
This is the boolean action of interest to us.

\subsection{The spectral form of the Koopman representation induced by the boolean action}\label{Su:act2} 

The following theorem is the main result of the paper. 

\begin{theorem}\label{T:koo}
The spectral form of the Koopman representation induced by the boolean action of 
$L^0(\lambda, \Tbb)$ 
on $(X_\infty, \gamma_\infty)$ is determined by the sequence of measures 
$(\mu^j_x)_{x,j}$
described as follows: 
\begin{itemize}

\item if ${\rm dom}(x)\subseteq \{ -1, 1\}$, then $\mu^1_x=\lambda^{D(x)}$;

\item if ${\rm dom}(x) \not \subseteq \{ -1,1\}$ or $j \ne 1$, then $\mu^j_x = 0$.

\end{itemize}
\end{theorem}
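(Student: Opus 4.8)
The plan is to identify $L^2(\gamma_\infty)$ with the symmetric Fock space over the complex Gaussian Hilbert space generated by the $z_s$, and then to match the resulting chaos decomposition component by component with the representations $\rho_{\mu_x}$. I begin with the first chaos. Let $\mathcal H\subseteq L^2(\gamma_\infty)$ be the closed linear span of $\{z_s:s\in\Sigma\}$. Using the averaging relation $z_s=(z_{s0}+z_{s1})/\sqrt2$ together with \eqref{inner_prod} for $k=m=1$, I would check that $z_s\mapsto 2^{|s|/2}\mathbf 1_{[s]}$ extends to a unitary $\mathcal H\cong L^2(\lambda)$, both sides being closures of the same tree of averaging relations. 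For $g\in\Sbb_n$ and $|s|\ge n$ the Koopman operator sends $z_s$ to $\overline{g_{s\restriction n}}\,z_s$, so under this identification it becomes the multiplication operator $M_{\bar g}$ on $L^2(\lambda)$; by density of $\Sbb$ and continuity of both representations the Koopman action on $\mathcal H$ is $g\mapsto M_{\bar g}$, which is exactly $\rho_{\mu_x}$ for $x=(-1\mapsto1)$, $\mu_x=\lambda$. The closed span $\overline{\mathcal H}$ of $\{\bar z_s\}$ is identified the same way with $g\mapsto M_g$, i.e. with $x=(1\mapsto1)$.

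Next I would promote this to every chaos level. Because $\gamma_\infty$ is a complex Gaussian measure, $L^2(\gamma_\infty)=\bigoplus_{p,q\ge0}H_{p,q}$, where $H_{p,q}$ is the closed span of the Wick (normal-ordered) products of $p$ of the $z_s$ and $q$ of the $\bar z_s$. The key computation is that \eqref{inner_prod}, with its factors $m!$, makes these Wick products an orthogonal system whose Gram structure is precisely that of a symmetric tensor power, so that the Wick product map is a unitary
$$H_{p,q}\;\cong\;\operatorname{Sym}^p(\mathcal H)\otimes\operatorname{Sym}^q(\overline{\mathcal H})\;\cong\;\operatorname{Sym}^p\!\big(L^2(\lambda)\big)\otimes\operatorname{Sym}^q\!\big(L^2(\lambda)\big).$$
Since a good permutation fixes the first coordinate, for $x$ with $x(-1)=p$ and $x(1)=q$ one has $\widetilde{L^2}(\lambda^{D(x)})=\operatorname{Sym}^p(L^2(\lambda))\otimes\operatorname{Sym}^q(L^2(\lambda))$, and, writing $\alpha_{k,i}=\pi_{k,i}(\alpha)$, the Koopman action on $H_{p,q}$ factors through the first chaos as multiplication by $\prod_{i<p}\overline{g(\alpha_{-1,i})}\prod_{j<q}g(\alpha_{1,j})=R_x(g)$. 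Hence $H_{p,q}\cong\rho_{\lambda^{D(x)}}$.

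It then remains to read off the spectral form. The summand $(p,q)=(0,0)$ consists of the constants, i.e. the fixed subspace discarded in passing to $H_0$. Every other pair corresponds bijectively to an $x$ with $\operatorname{dom}(x)\subseteq\{-1,1\}$ via $p=x(-1)$, $q=x(1)$, and each such $x$ arises from exactly one summand; thus each occurs with multiplicity one, giving $\mu^1_x=\lambda^{D(x)}$ and $\mu^j_x=0$ for $j\ge2$, while no $x$ with $\operatorname{dom}(x)\not\subseteq\{-1,1\}$ occurs. I would finish by verifying that $\lambda^{D(x)}$ is compatible with $x$ in the sense of (\ref{marginal})--(\ref{off_diag}): its marginals are $\lambda$, it is invariant under good homeomorphisms as a product measure, and, $\lambda$ being non-atomic, the off-diagonal sets in (\ref{off_diag}) are null. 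This makes each $\rho_{\lambda^{D(x)}}$ well defined and, via the uniqueness clause of Theorem~\ref{T:rep}, pins the $\mu^j_x$ down up to mutual absolute continuity.

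The main obstacle is the second step: showing the Wick product map is unitary onto $H_{p,q}$ and, crucially, that the measure it produces on the symmetric power is the \emph{product} measure $\lambda^{D(x)}$. This is exactly where one sees that higher powers at a single point, such as $z_s^2$, which naively transform by $\bar g_s^{\,2}$ and would suggest a domain value $-2$, are absorbed into the diagonal of the symmetric power and therefore supported on a $\lambda^{D(x)}$-null set; this is why $\operatorname{dom}(x)$ never leaves $\{-1,1\}$. Matching the $m!$ factors of \eqref{inner_prod} against the symmetrization, and extending equivariance from the dense subgroup $\Sbb$ to all of $L^0(\lambda,\Tbb)$ by continuity, is the technical heart of the argument.
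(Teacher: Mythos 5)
Your outline is correct and reaches the theorem by a genuinely different route from the paper. You invoke the bigraded Wiener--It\^o chaos decomposition of a complex Gaussian $L^2$ space, $L^2(\gamma_\infty)=\bigoplus_{p,q}H_{p,q}$ with $H_{p,q}\cong\operatorname{Sym}^p(\mathcal H)\otimes\operatorname{Sym}^q(\overline{\mathcal H})$ via Wick products, together with second-quantization functoriality to transport the first-chaos action (multiplication by $g^{\pm1}$ on $L^2(\lambda)$) to each $H_{p,q}$; the spectral form then drops out of the uniqueness clause of Theorem~\ref{T:rep}. The paper never cites this machinery: it builds finite-level modified Fock spaces $\Gamma(n)$ spanned only by \emph{admissible} (multilinear, never containing both $v_s$ and $v_{\overline s}$) product vectors, maps them isometrically into both $A$ and $B$ by explicit formulas, and then proves by hand the one genuinely delicate fact, Lemma~\ref{L:offde}: that these admissible monomials are dense in $B$, via the inductive approximation of $z_s^k\bar z_s^m$ by multilinear products at finer levels (Claims~\ref{separated_product}--\ref{Q_dense}). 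Your approach buys brevity and conceptual clarity by outsourcing exactly that density step (and the permanent/Gram computation behind the $m!$ factors) to the standard theory of complex Gaussian Hilbert spaces as in \cite{Jan}; the paper's approach buys self-containedness and an explicit intertwiner. The two are reconciled by noting that admissible products coincide with Wick products (no contractions are possible), so the paper's $F_n^\beta$ is the Wick map restricted to a dense set of vectors.

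One local inaccuracy worth fixing: $z_s^2$ is \emph{not} supported on a $\lambda^{D(x)}$-null set under your identification --- it corresponds (up to normalization) to $\chi_{[s]\times[s]}\in\operatorname{Sym}^2(L^2(\lambda))$, a function of two variables of positive measure support, transforming by $\overline{g(a_1)}\,\overline{g(a_2)}$, i.e.\ already of type $x(-1)=2$. What is null is the diagonal $\{a_1=a_2\}$, which is where a hypothetical $x(-2)=1$ component would have to live; that is the content of compatibility condition~(\ref{off_diag}) for $\lambda^{D(x)}$, which you verify anyway. So the ``main obstacle'' you describe is not an obstacle once $H_{p,q}\cong\operatorname{Sym}^p(L^2(\lambda))\otimes\operatorname{Sym}^q(L^2(\lambda))$ is in hand; the genuine work in your route is justifying that unitary identification and the completeness of the chaos decomposition.
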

We will now rephrase Theorem~\ref{T:koo} in even more explicit terms. 
For $p,q\in \Zbb$ with $p,q\geq 0$ and $p+q>0$, define $x_{p,q}\in \Nbb[\Zbb^\times]$ 
by 
\[
x_{p,q}(k) := 
\begin{cases}
p & \textrm{ if } k = 1 \\
q & \textrm{ if } k =-1 \\
0 & \textrm{ otherwise}
\end{cases}
\]
With this notation Theorem~\ref{T:koo} says that, in the sequence $(\mu^j_x)_{x,j}$
in the spectral form of the Koopman representation of 
the boolean action of $L^0(\lambda, \Tbb)$ on $(X_\infty, \gamma_\infty)$, 
the only non-zero measures are $\mu^1_{x_{p,q}}$ and they are equal to $\lambda^{p+q}$.

The measure $\mu^1_{x_{p,q}}$ is defined on the space $C_{x_{p,q}}$; 
the space can be identified with $(2^\Nbb)^p\times (2^\Nbb)^q$ and the measure with $\lambda^{p+q}$. 
We introduce the following notation 
\[
\mu_{p,q}:= \mu^1_{x_{p,q}} \qquad \qquad C_{p,q}:= C_{x_{p,q}} \qquad \qquad \rho_{p,q}:=\rho_{\mu_{p,q}}.
\]
The Hilbert space $\widetilde{L^2}(\mu_{p,q})$ consists of all functions in $L^2(\mu_{p,q})$ invariant under the permutation 
of the first $p$ coordinates of $C_{p,q}$ and the last $q$ coordinates of $C_{p,q}$. 
The representation 
$\rho_{p,q} : L^0(\lambda,\Tbb) \to U\big(\widetilde{L^2}(\mu_{p,q})\big)$ is given by the formula 
\begin{equation}\label{rho_eq}
\rho_{p,q}(g)(f)(a,b) = \big(g(a_1)\cdots g(a_p)\cdot g(b_1)^{-1} \cdots g(b_q)^{-1}\big) f(a,b), 
\end{equation} 
where $g \in L^0(\lambda, \Tbb)$, $a= (a_1, \dots , a_p)$ and $b= (b_1, \dots , b_q)$ with $(a,b)\in C_{p,q}$.
(Here $\rho_{p,q}(g)$ is a unitary operator whose
value $\rho_{p,q}(g)(f)$ at $f \in \widetilde{L^2}(\mu_{p,q})$ is specified pointwise via (\ref{rho_eq}).)

Finally, we  state Theorem~\ref{T:koo} explicitly using the notation above. This is the statement that we will actually prove. Proving 
it suffices to justify Theorem~\ref{T:koo} by the uniqueness clause of Theorem~\ref{T:rep}. 

\begin{theorem}\label{T:exp} 
The Koopman representation of 
the boolean action of $L^0(\lambda, \Tbb)$ on $(X_\infty, \gamma_\infty)$ 
restricted to the orthogonal complement of constant functions is isomorphic to the $\ell^2$-sum of the representations 
$\rho_{p,q}$ over all $p,q\in \Zbb$, with $p,q \geq 0$ and $p+q>0$.
\end{theorem}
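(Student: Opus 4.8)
The plan is to realize $L^2(\gamma_\infty)$ as the symmetric Fock space over its first Wiener chaos and to match the resulting bigraded decomposition coordinate‑by‑coordinate with the family $(\rho_{p,q})$. Since $\gamma_\infty$ is a Gaussian measure, the Wiener--It\^{o} (second quantization) identification expresses $L^2(\gamma_\infty)$ as an orthogonal sum of chaoses, with the $0$‑th chaos equal to the constants $\Cbb$ and the first chaos $\mathcal H^{(1)}$ equal to the closed complex span of $\{z_s,\bar z_s:s\in\Sigma\}$. First I would record two consequences of \eqref{inner_prod}: for each $k$ the family $\{z_s:s\in 2^k\}$ is orthonormal, and $\langle z_s,\bar z_t\rangle=0$ for all $s,t$ (the case $s=t$ being $\int z_s^2\,d\gamma_\infty=0$, the case $s\ne t$ following by independence). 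Hence $\mathcal H^{(1)}=\mathcal K\oplus\overline{\mathcal K}$ orthogonally, where $\mathcal K:=\overline{\operatorname{span}}\{z_s\}$ and $\overline{\mathcal K}:=\overline{\operatorname{span}}\{\bar z_s\}$, and the Fock structure refines the chaos grading to the bigrading
\begin{equation*}
L^2(\gamma_\infty)=\bigoplus_{p,q\ge 0}\mathcal P_{p,q},\qquad \mathcal P_{p,q}:=\operatorname{Sym}^p(\mathcal K)\otimes\operatorname{Sym}^q(\overline{\mathcal K}),
\end{equation*}
where $\mathcal P_{0,0}=\Cbb$. The orthogonal complement of the constants is therefore $\bigoplus_{p+q>0}\mathcal P_{p,q}$, and it suffices to identify each summand equivariantly with $\rho_{p,q}$.

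The heart of the argument is the base case $p+q=1$. Using the defining relation $z_s=\tfrac{1}{\sqrt 2}(z_{s0}+z_{s1})$ together with level‑wise orthonormality, $\mathcal K$ is the closure of the increasing union of the spans of $\{z_s:s\in 2^k\}$. The identical relation $2^{k/2}\mathbf 1_{[s]}=\tfrac{1}{\sqrt 2}\big(2^{(k+1)/2}\mathbf 1_{[s0]}+2^{(k+1)/2}\mathbf 1_{[s1]}\big)$ holds in $L^2(\lambda)$ for the orthonormal vectors $2^{k/2}\mathbf 1_{[s]}$. Hence $z_s\mapsto 2^{|s|/2}\mathbf 1_{[s]}$ is a consistent, level‑wise isometric assignment and extends to a unitary $\Phi\colon\mathcal K\to L^2(\lambda)$. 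I would then check equivariance on the dense union: adopting the convention $\kappa(g)F=F\circ T_g$ (a representation since $L^0(\lambda,\Tbb)$ is abelian), for $g\in\Sbb_n$ and $|s|\ge n$ one has $\kappa(g)z_s=g_s z_s$, while $\rho_{1,0}(g)\big(2^{|s|/2}\mathbf 1_{[s]}\big)=g_s\,2^{|s|/2}\mathbf 1_{[s]}$ by \eqref{rho_eq}. Since $\Sbb=\bigcup_n\Sbb_n$ is dense and both sides are unitary, $\Phi$ intertwines the Koopman representation on $\mathcal K$ with $\rho_{1,0}$ on all of $L^0(\lambda,\Tbb)$ by continuity; conjugating gives $\overline{\mathcal K}\cong\rho_{0,1}$.

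To finish I would invoke functoriality of second quantization. Because the boolean action is by transformations that act linearly on the Gaussian coordinates and preserve $\gamma_\infty$, each Koopman operator $U_g$ is the second quantization $\Gamma(V_g)$ of its restriction $V_g$ to $\mathcal H^{(1)}$; by the previous paragraph $V_g=M_g\oplus M_{\bar g}$ respects the splitting $\mathcal K\oplus\overline{\mathcal K}$, so $U_g$ preserves each $\mathcal P_{p,q}$ and acts there as $\operatorname{Sym}^p\!\big(V_g\!\restriction\!\mathcal K\big)\otimes\operatorname{Sym}^q\!\big(V_g\!\restriction\!\overline{\mathcal K}\big)$. Transporting along $\Phi$ identifies $\operatorname{Sym}^p(\mathcal K)\otimes\operatorname{Sym}^q(\overline{\mathcal K})$ with $\operatorname{Sym}^p\!\big(L^2(\lambda)\big)\otimes\operatorname{Sym}^q\!\big(\overline{L^2(\lambda)}\big)=\widetilde{L^2}(\mu_{p,q})$, the functions on $C_{p,q}$ symmetric in the first $p$ and in the last $q$ coordinates, and the symmetric powers of the multiplication representations go over to multiplication by $g(a_1)\cdots g(a_p)\,g(b_1)^{-1}\cdots g(b_q)^{-1}$; this is exactly $\rho_{p,q}$ of \eqref{rho_eq}. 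Summing over $p+q>0$ then yields the claimed $\ell^2$‑sum.

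I expect the main obstacle to be the last paragraph's claim that $U_g=\Gamma(V_g)$ holds on the nose, that is, that the Koopman operators really are the Fock functor applied to their first‑chaos parts; this must be extracted from the explicit action formula and then extended from the dense subgroup $\Sbb$ to all of $L^0(\lambda,\Tbb)$ by continuity. Concretely it reduces to verifying, via \eqref{inner_prod}, that the orthogonal monomials in the $z_s$ (holomorphic) and the complex‑Hermite polynomials in the $z_s,\bar z_s$ (mixed bidegree) realize the symmetric tensor powers with the correct normalizations coming from the factorials $m!$, and that the per‑coordinate multipliers assemble into \eqref{rho_eq}. The conventions must be tracked so that holomorphic degree $p$ and antiholomorphic degree $q$ land on $\rho_{p,q}$ rather than $\rho_{q,p}$; with the opposite Koopman convention one obtains $\rho_{q,p}$ instead, which is immaterial since the sum ranges over all $p,q\ge 0$ with $p+q>0$.
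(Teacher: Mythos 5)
Your proposal is correct in outline, but it takes a genuinely different route from the paper. You decompose $L^2(\gamma_\infty)$ at once via the Wiener--It\^o chaos decomposition of the complex Gaussian family $\{z_s\}$, identify the first chaos with $L^2(\lambda)\oplus\overline{L^2(\lambda)}$ through $z_s\mapsto 2^{|s|/2}\chi_{[s]}$, and then invoke functoriality of second quantization to transport the whole bigraded structure $\bigoplus_{p,q}\operatorname{Sym}^p(\mathcal K)\otimes\operatorname{Sym}^q(\overline{\mathcal K})$ onto $\bigoplus_{p,q}\widetilde{L^2}(\mu_{p,q})$. The paper instead works with finite-level modified Fock spaces $\Gamma(n)$ built only from \emph{admissible} products (no $z_s$ and $\bar z_s$ at the same node), takes an inductive limit along the refinement maps $E_n$, and exhibits explicit intertwiners $F_n^\alpha$, $F_n^\beta$ on basic product vectors, with images the indicators $\chi_{\supt{\sigma_1,\dots,\sigma_l}}$ on one side and the ordinary monomials $\prod_i z_{\sigma_i}$ on the other. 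The trade-off is real: your route is shorter and makes the bigraded symmetric Fock structure conceptually inevitable, but the two steps you yourself flag --- completeness of the chaos decomposition for $\gamma_\infty$ and the identity $U_g=\Gamma(V_g)$ with matching $\sqrt{m!}$ normalizations --- carry essentially all of the content of the paper's density Lemma~\ref{L:offde}; in particular, Claim~\ref{Q_dense_base} there is exactly the hands-on substitute for Wick/complex-Hermite calculus, showing that $z_s^k\bar z_s^m$ is approximated by admissible products at deeper dyadic levels, so that no Wick correction is ever needed. The paper's construction also produces the concrete unitary on a dense set of vectors, which is what makes the identification of the determining measures as $\mu_{p,q}=\lambda^{p+q}$ (and hence Theorem~\ref{T:koo}) immediate, whereas in your approach that identification has to be read off from the transported inner products. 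If you carry out your plan, the points to write out carefully are: that the Koopman operators of elements of $\Sbb$ are genuinely multiplicative composition operators preserving the first chaos (so that $U_g=\Gamma(V_g)$ follows from chaos completeness), and that the orientation convention sends holomorphic degree to the exponent $+1$ in \eqref{rho_eq}; neither is an obstacle, only the abelianness of $\Tbb$ and the density of $\Sbb$ are needed, exactly as in the paper.
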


\section{The proof of the main theorem}\label{S:prtr} 

Let $A$ be the $\ell^2$-sum of the Hilbert spaces $\widetilde{L^2}(\mu_{p,q})$ over $p,q\in \Zbb$ with 
$p,q\geq 0$ and $p+q>0$. Let 
\[
\alpha \colon L^0(\lambda, \Tbb)\to {\mathcal U}(A)
\]
be the unitary representation arising from the representations $\rho_{p,q}$ in (\ref{rho_eq}).
Let $B$ be the Hilbert space that is the orthogonal complement of constant functions in $L^2(\gamma_\infty)$. Let 
\[
\beta \colon L^0(\lambda, \Tbb)\to {\mathcal U}(B)
\]
be the Koopman representation arising from the boolean action on $(X_\infty, \gamma_\infty)$.

We will prove Theorem~\ref{T:exp} by showing that there exists a Hilbert space isomorphism $\Phi\colon A\to B$ which
is equivariant between the restrictions of $\alpha$ and $\beta$ to $\Sbb$;
this is sufficient since $\Sbb$ is dense in $L^0(\lambda,\Tbb)$.
Rather than construct $\Phi$ directly, we will proceed as follows. 
We will build a sequence of Hilbert spaces $\Gamma(n)$ and embeddings
$E_n:\Gamma(n) \to \Gamma(n+1)$, defined for $n \in \Nbb$. The spaces $\Gamma(n)$ are variations of Fock spaces. 
For each $n \in \Nbb$, we will also construct a representation
$\psi_n\colon \Sbb_n\to {\mathcal U}(\Gamma(n))$ so that 
\[
E_n\circ \psi_n = \psi_{n+1}\res \Sbb_n
\]
as well as functions
\[
F_n^\alpha \colon \Gamma(n) \to A\;\hbox{ and }\; F_n^\beta \colon \Gamma(n)\to B
\]
with the following properties: 
\begin{enumerate}[(a)]

\item \label{F_equi}
$F_n^\alpha$ and $F_n^\beta$ are $\Sbb_n$-equivariant between $\psi_n$ and $\alpha\res \Sbb_n$ and 
$\psi_n$ and $\beta \res \Sbb_n$, respectively; 

\item \label{F_cohere}
$F_n^\alpha = F_{n+1}^\alpha \circ E_n$ and $F_n^\beta = F_{n+1}^\beta \circ E_n$; 

\item \label{F_dense}
$\bigcup_n F_n^\alpha \big(\Gamma(n)\big)$ and $\bigcup_n F_n^\beta \big(\Gamma(n)\big)$ are dense in $A$ and $B$, respectively. 
\end{enumerate}

Strictly speaking, involving the $\Gamma(n)$ is not necessary: it would not be difficult to define $\Phi$ directly and
eliminate the reference to them.
Interpolating through the $\Gamma(n)$, however, does help abstract and separate the tasks in the proof; 
this is the reason we have chosen the current approach.

In order to see that this is sufficient to prove Theorem~\ref{T:exp},
set 
$A(n) := F_n^\alpha (\Gamma(n))$ and
$B(n) := F_n^\beta (\Gamma(n))$.
Clearly $A(n)$ and $B(n)$ are subspaces of $A$ and $B$, respectively, and by (\ref{F_cohere})
\[
A(n)< A(n+1)\;\hbox{ and }\; B(n)<B(n+1). 
\]
Define $\Phi_n\colon A(n)\to B(n)$ by 
\[
\Phi_n = F_n^\beta\circ (F_n^\alpha)^{-1}. 
\]
Then $\Phi_n$ is a Hilbert space isomorphism. By (\ref{F_cohere}), $\Phi_{n+1}$ extends $\Phi_n$. Thus, 
by (\ref{F_dense}), there exists a Hilbert space isomorphism $\Phi\colon A\to B$ that extends each $\Phi_n$. 
Further, by (\ref{F_equi}), $\Phi_n$ is $\Sbb_n$-equivariant between $\alpha \res \Sbb_n$ and 
$\beta \res \Sbb_n$. Therefore, $\Phi$ is $\Sbb$-equivariant between $\alpha$ and $\beta$, as required. 

Therefore, to prove Theorem~\ref{T:exp}, it remains to complete the following steps: 
\begin{enumerate}[1.]
\item \label{GammaEpsi}
Define $\Gamma(n)$, $E_n$, and $\psi_n$ and prove their relevant properties.

\item \label{F^a_n}
Define $F_n^\alpha$ and prove their relevant properties.

\item \label{F^b_n}
Define $F_n^\beta$ and prove their relevant properties.
\end{enumerate}

\subsection*{Step \ref{GammaEpsi}: $\Gamma(n)$, $E_n$, and $\psi_n$ and their properties}\label{S:foc} 

We will now recall the construction of symmetric Fock space and describe a modification
which we will need.
For a non-empty finite set $S$ with the normalized counting measure we consider $L^2(S)$, the space of all complex valued functions on $S$ with the $L^2$-norm.
For $s \in S$,  set $v_s := \sqrt{|S|}\chi_{\{ s\}}$. 
The set $\{v_s \mid s\in S\}$ is an orthonormal basis of $L^2(S)$. 

We consider the symmetric tensor product $L^2(S)^{\bigodot l}$.
This is done as in \cite[Appendix E]{Jan}.
One first forms the tensor product $L^2(S)^{\bigotimes l}$ 
which is the Hilbert space with orthonormal basis consisting of all vectors of the form
$v_{s_1}\otimes \cdots \otimes v_{s_l}$ for $s_1, \dots , s_l\in S$.
Now $L^2(S)^{\bigodot l}$ is the subspace of $L^2(S)^{\bigotimes l}$ spanned by vectors of the form 
\[
v_{s_1}\cdots v_{s_l} = \frac{1}{\sqrt{l!}} \sum_{f\in {\rm Sym}(l)} v_{s_{f(1)}}\otimes \cdots \otimes v_{s_{f(l)}}.
\]
One checks that the vector $v_{s_1}\cdots v_{s_l}$ does not depend on the order of $s_1, \dots, s_l$ and that 
vectors of this form are orthogonal to each other. 
Thus $L^2(S)^{\bigodot l}$ has an orthogonal basis consists of all vectors of the form $v_{s_1}\cdots v_{s_l}$ for 
$s_1\leq \cdots \leq s_l$, where $\leq$ is a fixed linear order on $S$.
We will refer to such vectors as {\bf basic product vectors}.
It may be verified that
\begin{equation}\label{E:nor}
\| v_{s_1}\cdots v_{s_l}\| = (m_1!\, \cdots \, m_{l'}!)^{1/2}, 
\end{equation}
where $l'$ is the number of distinct entries in the sequence $s_1, \dots, s_l$, say these are $t_1, \dots, t_{l'}$, and 
\[
m_i = |\{ j\leq l\colon t_i=s_j\}| ,\hbox{ for }1\leq i\leq l'. 
\]

Now, we consider a non-empty finite set $S$ together with a disjoint copy $\overline{S}$ of $S$.
We also fix an involution $\sigma \mapsto \overline{\sigma}$ of
$S \cup \overline{S}$ which maps $S$ to $\overline{S}$ and
vice-versa. 
We say that a sequence $\sigma_1,\dots,\sigma_l$ is {\bf admissible} if for all $1 \leq i,j \leq l$,
$\sigma_i \ne \overline{\sigma_j}$.
We define the subspace
\[
L^2(S\cup \overline{S})^{\od l}
\]
of the symmetric tensor product $L^2(S\cup \overline{S})^{\bigodot l}$ to be the span of all
basic product vectors $v_{\sigma_1} \cdots v_{\sigma_l}$ where $\sigma_1,\dots,\sigma_l$ is admissible.
Following \cite[Appendix E]{Jan}, we define the modified symmetric Fock space 
$\Gamma(S, \overline{S})$ to be the the $\ell^2$-direct sum of the spaces
$L^2(S\cup\overline{S})^{\od l}$ as $l$ ranges over the positive integers.

\subsection*{Definition of $\Gamma(n)$} 
Fix a disjoint copy $\overline{2^n}$ of $2^n$ and an involution $\sigma \mapsto \overline{\sigma}$ as above and define
$$\Gamma(n):=\Gamma(2^n,\overline{2^n}).$$

We set up now some notation that will be useful throughout the proof. 
Let $\sigma_1, \dots , \sigma_l$ be a sequence  of elements of $2^n\cup \overline{2^n}$. 
Define 
\begin{equation}\label{E:mult}
m(\sigma_1,\dots, \sigma_l) := (m_s)_{s\in 2^n} \;\hbox{ and }\; d(\sigma_1,\dots, \sigma_l) := (p,q), 
\end{equation}
where 
\[
m_s :=|\{ i\colon \sigma_i=s \hbox{ or } \sigma_i= \overline{s}\}|, \; 
p := |\{ i\colon \sigma_i\in 2^n\}|,\hbox{ and }\, q :=  |\{ i\colon \sigma_i\in \overline{2^n}\}|.
\]
Note that 
\begin{equation}\label{E:suml} 
\sum_{s\in 2^n} m_s= p+q=l.
\end{equation}
We say a pair of sequences $\big((r_1, \dots, r_p), (t_1, \dots , t_q)\big)$ of elements of $2^n$ is a {\bf variant of} 
$(\sigma_1, \dots, \sigma_l)$ if $(r_1, \dots, r_p)$ is a permutation of the sequence $(\sigma_i\colon \sigma_i\in 2^n)$ and 
$(t_1, \dots, t_q)$ is a permutation of the sequence $(\overline{\sigma_i}\colon \sigma_i\in \overline{2^n})$.
For ease of computation, it will be useful to have the following convention for appending a binary digit to
an element of $\overline{2^n}$.
If $\sigma = \overline{s}$ 
for some $s \in 2^n$ and $\epsilon \in \{0,1\}$, then set
\[
\sigma\epsilon := \overline{s\epsilon}. 
\]

The following lemma contains the relevant to us facts on the norm in the $\Gamma(n)$. 

\begin{lemma}\label{L:norep} 
Let $v_{\sigma_1}\cdots v_{\sigma_l} \in \Gamma(n)$ for some $n,l\in \Nbb$ and 
let $m( \sigma_1\cdots \sigma_l) = (m_s)_{s\in 2^n}$. 
The following are true:
\begin{enumerate}[(i)]
\item \label{norm_comp} $\| v_{\sigma_1}\cdots v_{\sigma_l}\|^2 = \prod_{s\in 2^n} (m_s!)$

\item \label{cat_norm_comp}
For $\epsilon \in 2^l$ 
\[
\|  v_{\sigma_1\epsilon_1}\cdots v_{\sigma_l\epsilon_l}\|^2 = \prod_{s\in 2^n} (k_s!(m_s-k_s)!), 
\]
where 
$k_s$ is the number of $i$ such that $\epsilon_i =0$ and either $\sigma_i=s$  or $\sigma_i=\overline{s}$. 

\item \label{norm_cr} 
${\displaystyle 2^l \| v_{\sigma_1}\cdots v_{\sigma_l}\|^2 =  \|\sum_{\epsilon\in 2^l} v_{\sigma_1\epsilon_1}\cdots v_{\sigma_l\epsilon_l}\|^2}$
\end{enumerate}
\end{lemma}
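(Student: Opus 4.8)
The plan is to deduce all three parts from the general norm formula \eqref{E:nor} for the symmetric tensor product, using admissibility to move between levels $n$ and $n+1$. For part (\ref{norm_comp}) I would specialize \eqref{E:nor} to the set $2^n\cup\overline{2^n}$. The factorials there run over the \emph{distinct} entries of $\sigma_1,\dots,\sigma_l$, and admissibility guarantees that for each $s\in 2^n$ at most one of $s,\overline{s}$ occurs among the $\sigma_i$; hence the multiplicity of whichever one occurs equals $m_s$, while absent letters contribute $0!=1$. Regrouping the product by $s\in 2^n$ yields $\prod_{s\in 2^n}(m_s!)$.

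For part (\ref{cat_norm_comp}) the point is that $v_{\sigma_1\epsilon_1}\cdots v_{\sigma_l\epsilon_l}$ is itself a basic product vector in $\Gamma(n+1)$. Using the convention $\overline{s}\epsilon=\overline{s\epsilon}$, admissibility of $(\sigma_i)$ in $2^n\cup\overline{2^n}$ is inherited by $(\sigma_i\epsilon_i)$ in $2^{n+1}\cup\overline{2^{n+1}}$, since an identity $\sigma_i\epsilon_i=\overline{\sigma_j\epsilon_j}$ would force $\sigma_i=\overline{\sigma_j}$. I would then apply part (\ref{norm_comp}) one level up: each $s\in 2^n$ splits into $s0,s1\in 2^{n+1}$, a short count shows the level-$(n+1)$ multiplicity attached to $s0$ is $k_s$ and that attached to $s1$ is $m_s-k_s$, and regrouping gives $\prod_{s\in 2^n}(k_s!\,(m_s-k_s)!)$.

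Part (\ref{norm_cr}) is the substantive one. I would expand $\|\sum_\epsilon w_\epsilon\|^2=\sum_{\epsilon,\epsilon'}\langle w_\epsilon,w_{\epsilon'}\rangle$, where $w_\epsilon:=v_{\sigma_1\epsilon_1}\cdots v_{\sigma_l\epsilon_l}$, and use that a basic product vector depends only on the multiset of its indices while distinct multisets are orthogonal. The organizing observation is that the multiset underlying $w_\epsilon$ is determined \emph{exactly} by the tuple $(k_s)_{s\in 2^n}$ of part (\ref{cat_norm_comp}): the indices $i$ with $\sigma_i\in\{s,\overline{s}\}$ form a block of size $m_s$ all carrying the same letter, and $w_\epsilon$ records only how many of them receive digit $0$. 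Consequently $w_\epsilon=w_{\epsilon'}$ if and only if the two tuples coincide; the number of $\epsilon$ realizing a given tuple $(k_s)$ is $\prod_{s}\binom{m_s}{k_s}$, and by part (\ref{cat_norm_comp}) the common squared norm of those $w_\epsilon$ is $\prod_s k_s!\,(m_s-k_s)!$.

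Collecting the double sum by tuple turns it into $\sum_{(k_s)}\big(\prod_s\binom{m_s}{k_s}\big)^2\prod_s k_s!\,(m_s-k_s)!$, since cross terms between different tuples vanish. The summand factors over $s$, and the identity $\binom{m_s}{k_s}^2 k_s!\,(m_s-k_s)!=m_s!\,\binom{m_s}{k_s}$ collapses it to $\big(\prod_s m_s!\big)\sum_{(k_s)}\prod_s\binom{m_s}{k_s}=\big(\prod_s m_s!\big)\prod_s 2^{m_s}$, which by \eqref{E:suml} equals $2^l\prod_s m_s!=2^l\|v_{\sigma_1}\cdots v_{\sigma_l}\|^2$ using part (\ref{norm_comp}). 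The one place demanding care is exactly this combinatorial bookkeeping---verifying the block decomposition of the multiset and counting correctly which pairs $(\epsilon,\epsilon')$ give equal vectors---so I would treat that as the main obstacle; the algebra afterward is routine.
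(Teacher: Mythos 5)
Your proposal is correct and follows essentially the same route as the paper: parts (\ref{norm_comp}) and (\ref{cat_norm_comp}) by specializing \eqref{E:nor} and tracking how each $s$ splits into $s0,s1$ with multiplicities $k_s$ and $m_s-k_s$, and part (\ref{norm_cr}) by grouping the $\epsilon\in 2^l$ according to the tuple $(k_s)_s$, counting $\prod_s\binom{m_s}{k_s}$ equal vectors per tuple, and collapsing the sum via $\binom{m_s}{k_s}k_s!\,(m_s-k_s)!=m_s!$ and \eqref{E:suml}. Your explicit appeal to admissibility in part (\ref{norm_comp}) (so that at most one of $s,\overline{s}$ occurs and the product in \eqref{E:nor} regroups as $\prod_s m_s!$) is a detail the paper leaves implicit, but the argument is the same.
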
 

\begin{proof}
(\ref{norm_comp}) is a restatement of \eqref{E:nor}, and 
(\ref{cat_norm_comp}) follows from (\ref{norm_comp}) after noticing that
$m(\sigma_1\epsilon_1, \dots, \sigma_l\epsilon_l) = (m'_t)_{t\in 2^{n+1}}$, where, 
for $s\in 2^n$, $m'_{s0} = k_s$ and $m'_{s1} = m_s-k_s$. 

We now show (iii). For two sequences $\epsilon,\epsilon' \in 2^l$, the two vectors of the form
$v_{\sigma_1\epsilon_1}\cdots v_{\sigma_l\epsilon_l}$ and
$v_{\sigma_1\epsilon_1'}\cdots v_{\sigma_l\epsilon_l'}$
are equal precisely when 
the values $k_s$ computed for one of them as in \eqref{cat_norm_comp} are equal to 
the values computed for the other one; otherwise, the two vectors are orthogonal. Additionally, for 
a given sequence $(k_s)_{s\in 2^n}$ with $0\leq k_s\leq m_s$, there are 
$\prod_{s\in 2^n} \binom{m_s}{k_s}$ such equal to each other vectors. With these observations in mind, we 
compute $\| \sum_{\epsilon\in 2^l}  v_{\sigma_1\epsilon_1}\cdots v_{\sigma_l\epsilon_l}\|^2$. We 
let $s$ range over $2^n$ and $(k_s)_s$ range over all sequences indexed by $s\in 2^n$ such that 
$0\leq k_s\leq m_s$ for all $s$. Using (\ref{cat_norm_comp}) to get the first equality, the identity 
$\sum_s m_s=l$ from \eqref{E:suml} to get the next to last equality, and (\ref{norm_comp}) to get the last one, we have 
\[
\begin{split}
\| \sum_{\epsilon\in 2^l}  v_{\sigma_1\epsilon_1}\cdots v_{\sigma_l\epsilon_l}\|^2 
&= \sum_{(k_s)_s} \left[ \left( \prod_s \binom{m_s}{k_s}\right)^2 \left(\prod_s k_s!(m_s-k_s)!\right)\right]\\
&=  \sum_{(k_s)_s} \left[\prod_s \binom{m_s}{k_s} \prod_s (m_s!)\right]\\ 
&=  \left( \sum_{(k_s)_s} \prod_s \binom{m_s}{k_s} \right) \, \prod_s (m_s!)\\
&= 2^l \prod_s (m_s!) = 2^l \|v_{\sigma_1} \cdots v_{\sigma_l}\|^2.\qedhere
\end{split} 
\]
\end{proof}

\subsection*{Definition of $E_n$}

We define a Hilbert space embedding $E_n\colon \Gamma(n)\to \Gamma(n+1)$. We specify it first on basic product vectors 
by letting 
\[
E_n(v_{\sigma_1}\cdots v_{\sigma_l}) = \bigl(\frac{1}{\sqrt{2}}\bigr)^l  \sum_{\epsilon\in 2^l}  v_{\sigma_1\epsilon_1}\cdots v_{\sigma_l\epsilon_l},
\]
for a basic product vector $v_{\sigma_1}\cdots v_{\sigma_l}$ in $L^2(2^n\cup \overline{2^n})^{\od l}$ with $l\geq 1$. 
Note that $E_n(v_{\sigma_1}\cdots v_{\sigma_l})$ is a vector in $L^2(2^{n+1}\cup\overline{2^{n+1}})^{\od l}$. Clearly, 
$E_n$ is induced by the assignment 
\begin{equation}\notag
\begin{split}
v_s\to \frac{v_{s0}+v_{s1}}{\sqrt{2}} \qquad \qquad \qquad
v_{\overline s}\to \frac{v_{\overline{s0}}+v_{\overline{s1}}}{\sqrt{2}}
\end{split}
\end{equation}
from $L^2(2^n\cup \overline{2^n})$ to $L^2(2^{n+1}\cup \overline{2^{n+1}})$.

\begin{lemma}\label{L:En_emb}
$E_n$ extends to a Hilbert space embedding $\Gamma(n)\to \Gamma(n+1)$, which we again denote by $E_n$. 
\end{lemma}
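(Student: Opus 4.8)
The plan is to prove that $E_n$ is a well-defined linear isometry; since an isometry between Hilbert spaces is exactly a Hilbert space embedding, this is all that is needed. I would organize the argument around the fact that the basic product vectors $v_{\sigma_1}\cdots v_{\sigma_l}$, with $l\geq 1$ and $\sigma_1\leq\cdots\leq\sigma_l$ admissible, form an orthogonal basis of $\Gamma(n)$. It then suffices to check three things: that $E_n$ carries each such vector into $\Gamma(n+1)$, that $E_n$ preserves its norm, and that distinct basis vectors have orthogonal images. Granting these, the linear extension of $E_n$ to finite combinations is isometric, since orthogonality of the images together with matching norms gives $\|\sum_i c_i E_n(w_i)\|^2=\sum_i |c_i|^2\|w_i\|^2=\|\sum_i c_i w_i\|^2$, and an isometry on a dense subspace extends uniquely to an isometric embedding $\Gamma(n)\to\Gamma(n+1)$.

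First I would verify that $E_n$ lands in the correct space, i.e. that if $\sigma_1,\dots,\sigma_l$ is admissible then so is $\sigma_1\epsilon_1,\dots,\sigma_l\epsilon_l$ for every $\epsilon\in 2^l$. Using the convention $\overline{s}\,\epsilon=\overline{s\epsilon}$, appending a binary digit commutes with the involution, so $\sigma_i\epsilon_i=\overline{\sigma_j\epsilon_j}$ would force $\sigma_i=\overline{\sigma_j}$ after deleting the last digit, contradicting admissibility of the original sequence. Hence each $E_n(v_{\sigma_1}\cdots v_{\sigma_l})$ lies in $L^2(2^{n+1}\cup\overline{2^{n+1}})^{\od l}\subseteq\Gamma(n+1)$. (Well-definedness of the value on a basic product vector is automatic once one notes, as in the remark following the definition, that $E_n$ is the $l$-fold symmetric tensor power of the isometric assignment $v_s\mapsto(v_{s0}+v_{s1})/\sqrt2$, $v_{\overline s}\mapsto(v_{\overline{s0}}+v_{\overline{s1}})/\sqrt2$.)

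The norm computation is then immediate from Lemma~\ref{L:norep}(\ref{norm_cr}): since $E_n(v_{\sigma_1}\cdots v_{\sigma_l})=(1/\sqrt2)^l\sum_{\epsilon\in 2^l}v_{\sigma_1\epsilon_1}\cdots v_{\sigma_l\epsilon_l}$, we obtain $\|E_n(v_{\sigma_1}\cdots v_{\sigma_l})\|^2=2^{-l}\cdot 2^l\,\|v_{\sigma_1}\cdots v_{\sigma_l}\|^2=\|v_{\sigma_1}\cdots v_{\sigma_l}\|^2$. This is the place where the lemma just proved is used, and it is the reason for stating part (\ref{norm_cr}) in that form.

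The step I expect to require the most care is the orthogonality of images of distinct basic product vectors $w=v_{\sigma_1}\cdots v_{\sigma_l}$ and $w'=v_{\tau_1}\cdots v_{\tau_{l'}}$. When $l\neq l'$ the images lie in distinct homogeneous summands of $\Gamma(n+1)$ and are orthogonal for free. When $l=l'$ I would expand $\langle E_n w,E_n w'\rangle$ as a double sum over $\epsilon,\epsilon'\in 2^l$ of inner products $\langle v_{\sigma_1\epsilon_1}\cdots v_{\sigma_l\epsilon_l},\,v_{\tau_1\epsilon'_1}\cdots v_{\tau_l\epsilon'_l}\rangle$. Each such term vanishes unless the two sequences have the same multiset of entries in $2^{n+1}\cup\overline{2^{n+1}}$; but deleting the last digit from every entry (a map commuting with the involution) would then force $(\sigma_1,\dots,\sigma_l)$ and $(\tau_1,\dots,\tau_l)$ to have the same multiset in $2^n\cup\overline{2^n}$, i.e. $w=w'$, contrary to assumption. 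Hence every term vanishes and $\langle E_n w,E_n w'\rangle=0$. Combining the three points produces the isometric extension and completes the proof. The only genuine subtlety is the bookkeeping in this last step, namely tracking that non-orthogonality of the summands in $\Gamma(n+1)$ can only occur when the underlying vectors in $\Gamma(n)$ already coincide.
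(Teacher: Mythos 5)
Your proof is correct and follows essentially the same route as the paper: reduce to norm preservation and orthogonality on basic product vectors, invoke Lemma~\ref{L:norep}(\ref{norm_cr}) for the norm, and observe that distinct basic product vectors in $\Gamma(n)$ yield families of pairwise distinct (hence orthogonal) basic product vectors in $\Gamma(n+1)$. You simply spell out the admissibility and orthogonality bookkeeping that the paper leaves implicit.
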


\begin{proof} 
It suffices to show that $E_n$ preserves the norm and orthogonality of basic product vectors. 
Orthogonality of images of basic product vectors is immediate from the observation that distinct 
basic product vectors are orthogonal. 
The preservation of norm follows from Lemma~\ref{L:norep}\eqref{norm_cr}.
\end{proof}

\subsection*{Definition of $\psi_n$} 

Let $g= (g_s)_{s\in 2^n}\in  \Sbb_n$ and let $v_{\sigma_1} \cdots v_{\sigma_l}$ be a basic product
in $\Gamma(n)$. 
Let $\psi_n(g):\Gamma(n) \to \Gamma(n)$ be the linear map determined by 
\[
\psi(g)(v_{\sigma_1} \cdots v_{\sigma_l}) := 
\big(g_{r_1} \cdots g_{r_p}\cdot g^{-1}_{t_1} \cdots g^{-1}_{t_q}\big) \, v_{\sigma_1} \cdots v_{\sigma_l}, 
\]
where $\big((r_1, \dots, r_p), (t_1, \dots , t_q)\big)$ is some variant of $(\sigma_1, \dots, \sigma_l)$.
Note that since $\Tbb$ is commutative, this definition does not depend on the choice of the variant. 
Clearly $\psi_n$ is a unitary representation of $\Sbb_n$ on $\Gamma(n)$ and 
$E_n\colon \Gamma(n)\to \Gamma(n+1)$ is $\Sbb_n$-equivariant between $\psi_n$ 
and $\psi_{n+1}\res \Sbb_n$. 
This completes Step \ref{GammaEpsi} in our proof of Theorem~\ref{T:exp}.

\subsection*{Step \ref{F^a_n}: The maps $F_n^\alpha:\Gamma(n) \to A$ and their properties}\label{S:dirs}

Suppose that $\sigma_1,\dots,\sigma_l$ is an admissible sequence from $2^n \cup \overline{2^n}$.
Set $(p,q):= d(\sigma_1, \dots, \sigma_l)$ and define
\begin{equation}\label{E:setsym}
\supt{\sigma_1, \dots, \sigma_l} \subseteq (2^\Nbb)^p\times (2^\Nbb)^q= (2^\Nbb)^l
\end{equation}
to be the union of all sets of the form 
\begin{equation}\label{E:setch}
[r_1]\times \cdots \times [r_p] \times [t_1]\times \cdots \times [t_q], 
\end{equation} 
where $\big((r_1, \dots, r_p), (t_1, \dots, t_q)\big)$ ranges over all variants of $\sigma_1, \dots, \sigma_l$. 

We define a Hilbert space embedding $F^\alpha_n\colon \Gamma(n)\to A$. If
$v_{\sigma_1}\cdots v_{\sigma_l}$ is a basic product vector in $\Gamma(n)$, then let 
\[
F^\alpha_n(v_{\sigma_1}\cdots v_{\sigma_l}) = 
\sqrt{\frac{2^{nl}}{p!\,q!}} \prod_{s\in 2^n} (m_s!) \cdot \chi_{\supt{\sigma_1,\dots,\sigma_l}},
\]
where $(m_s)_{s\in 2^n}= m(\sigma_1\cdots \sigma_l)$ and
$(p,q) = d(\sigma_1\cdots \sigma_l)$.
Note that $F^\alpha_n(v_{\sigma_1}\cdots v_{\sigma_l})$ is an element of $\widetilde{L^2}(\lambda^p\times \lambda^q)$. 

\begin{lemma}\label{P:exta} 
$F_n^\alpha$ extends to a Hilbert space embedding $\Gamma(n) \to A$, which we again denote by $F^\alpha_n$.
\end{lemma}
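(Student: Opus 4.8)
The plan is to follow the pattern of Lemma~\ref{L:En_emb}: since the basic product vectors form an orthogonal basis of $\Gamma(n)$, it suffices to check that $F_n^\alpha$ sends distinct basic product vectors to orthogonal elements of $A$ and preserves their norms. The key organizing observation is that, by construction, $F_n^\alpha(v_{\sigma_1}\cdots v_{\sigma_l})$ lies in the single $\ell^2$-summand $\widetilde{L^2}(\mu_{p,q})$ of $A$ indexed by $(p,q) = d(\sigma_1,\dots,\sigma_l)$. The image does lie in this summand, since $\supt{\sigma_1,\dots,\sigma_l}$ is defined in \eqref{E:setsym} as a union over all variants and is therefore invariant under permuting the first $p$ and the last $q$ coordinates, as already noted after the definition of $F_n^\alpha$.

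For orthogonality I would split into two cases. If two basic product vectors have distinct values of $d$, their images lie in different summands of the $\ell^2$-sum and are thus orthogonal. If they share the same $(p,q)$ but are distinct, they correspond to distinct admissible multisets of elements of $2^n\cup\overline{2^n}$; hence the multisets of length-$n$ cylinders forced on the coordinates by the defining products \eqref{E:setch} differ. Since each point of $2^\Nbb$ has a unique length-$n$ prefix, the cylinders $[s]$ with $s\in 2^n$ partition $2^\Nbb$, so the two sets $\supt{\sigma_1,\dots,\sigma_l}$ are disjoint, and the corresponding characteristic functions are orthogonal.

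The heart of the argument is the norm computation, which reduces to evaluating $\mu_{p,q}(\supt{\sigma_1,\dots,\sigma_l})$. Writing $p_s := |\{i : \sigma_i = s\}|$ and $q_s := |\{i : \sigma_i = \overline{s}\}|$, the set $\supt{\sigma_1,\dots,\sigma_l}$ is a disjoint union of cylinder products of the form \eqref{E:setch}, each of $\mu_{p,q}$-measure $2^{-nl}$, one for each distinct variant. Counting distinct orderings of the two multisets produces exactly $\frac{p!}{\prod_s p_s!}\cdot\frac{q!}{\prod_s q_s!}$ such products, so $\mu_{p,q}(\supt{\sigma_1,\dots,\sigma_l}) = \frac{p!\,q!}{\prod_s p_s!\,q_s!}\,2^{-nl}$. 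The decisive simplification is admissibility: for each $s$ at most one of $p_s,q_s$ is nonzero, so $p_s!\,q_s! = m_s!$ and $\prod_s p_s!\,q_s! = \prod_s m_s!$. Substituting into the definition of $F_n^\alpha$ gives $\|F_n^\alpha(v_{\sigma_1}\cdots v_{\sigma_l})\|^2 = \frac{2^{nl}}{p!\,q!}\bigl(\prod_s m_s!\bigr)^2\cdot\frac{p!\,q!}{\prod_s m_s!}\,2^{-nl} = \prod_s m_s!$, which is exactly $\|v_{\sigma_1}\cdots v_{\sigma_l}\|^2$ by Lemma~\ref{L:norep}\eqref{norm_comp}.

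The main obstacle is the bookkeeping in this measure computation: correctly counting the distinct variant cylinder products and, above all, recognizing that admissibility collapses $\prod_s p_s!\,q_s!$ to $\prod_s m_s!$. Once that identity is in hand, the normalizing constant in the definition of $F_n^\alpha$ has been chosen precisely so that every factor cancels except $\prod_s m_s!$, and the lemma follows.
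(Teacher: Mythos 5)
Your proposal is correct and follows essentially the same route as the paper: reduce to norm and orthogonality of basic product vectors, split orthogonality into the two cases according to whether $d(\sigma_1,\dots,\sigma_l)$ agrees, and compute the measure of $\supt{\sigma_1,\dots,\sigma_l}$ by counting distinct variant cylinder products. The only cosmetic difference is that you count the variants directly via a multinomial coefficient (making the use of admissibility to get $\prod_s p_s!\,q_s! = \prod_s m_s!$ explicit), whereas the paper phrases the same count as a group/stabilizer argument on pairs of permutations.
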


\begin{proof}
It is sufficient to show that $F_n^\alpha$ preserves the norm and orthogonality of
the basic product vectors.
In order to see that $F_n^\alpha$ preserves the norm of basic product vectors,
we will first show that 
\begin{equation}\label{E:meas}
\lambda^l\big( \supt{ \sigma_1, \dots, \sigma_l}\big) = \frac{p!\, q!}{2^{nl}\prod_{s\in 2^n} (m_s!)}.
\end{equation} 
Let $\big((r_1, \dots, r_p), (t_1, \dots, t_q)\big)$ be a variant of $(\sigma_1, \dots, \sigma_l)$. 
For function $f$ and $g$ that permute sequences $(1, \dots, p)$ and $(1, \dots, q)$, respectively, we consider the set 
\begin{equation}\label{E:frt}
[r_{f(1)}]\times\cdots \times [r_{f(p)}]\times [t_{g(1)}]\times \cdots \times [t_{g(q)}]. 
\end{equation} 
Each such set has $\lambda^l$-measure $2^{-nl}$, such sets are either disjoint or coincide, and their union over all 
$f, g$ is equal to $\supt{ \sigma_1, \dots, \sigma_l}$. Thus, to show \eqref{E:meas}, it suffices to see that 
there are $p!\, q!/\prod_{s\in 2^n} (m_s!)$ distinct sets of this form. This is true since pairs of permutations $(f, g)$ as above form a group 
(under coordinate-wise composition) of size $p!\, q!$, while the elements of this group for which the set \eqref{E:frt} is equal to 
\[
[r_1]\times\cdots \times [r_p]\times [t_1]\times \cdots \times [t_q] 
\]
form a subgroup of size $\prod_{s\in 2^n} (m_s!)$.
Using \eqref{E:meas}, we compute 
\begin{align*}
\|F^\alpha_n(v_{\sigma_1} \cdots v_{\sigma_l}) \|^2 & =
\frac{2^{nl}}{p!\,q!} \left( \prod_{s\in 2^n} (m_s!) \right)^2 \cdot \lambda^l\big( \supt{ \sigma_1, \dots, \sigma_l}\big)  \\
& = \frac{2^{nl}}{p!\,q!} \left( \prod_{s\in 2^n} (m_s!) \right)^2 \cdot
\frac{p!\, q!}{2^{nl}\prod_{s\in 2^n} (m_s!)} \\
& = \prod_{s\in 2^n} (m_s!) 
\end{align*}
showing that norm is preserved by $F^\alpha_n$. 

To see that $F_n^\alpha$ preserves the orthogonality of basic product vectors, suppose that $v_{\sigma_1}\cdots v_{\sigma_l}$ and $v_{\sigma'_1}\cdots v_{\sigma'_{l'}}$ are basic product
vectors in $\Gamma(n)$.
There are two cases. 
If $d(\sigma_1, \dots, \sigma_l) \not= d(\sigma'_1, \dots, \sigma'_{l'})$, then 
$F_n^\alpha(v_{\sigma_1}\cdots v_{\sigma_l})$ and $F_n^\alpha(v_{\sigma'_1}\cdots v_{\sigma'_{l'}})$ lie in distinct summands 
of the form $\widetilde{L^2}(\lambda^p\times \lambda^q)$, making them orthogonal. 
If $d(\sigma_1, \dots, \sigma_l) = d(\sigma'_1, \dots, \sigma'_{l'})=(p,q)$, then both 
$F_n^\alpha(v_{\sigma_1}\cdots v_{\sigma_l})$ and $F_n^\alpha(v_{\sigma'_1}\cdots v_{\sigma'_{l'}})$ lie in 
$\widetilde{L^2}(\lambda^p\times \lambda^q)$. Their supports 
$\supt{ \sigma_1, \dots, \sigma_l}$ and $\supt{ \sigma'_1, \dots, \sigma'_{l'}}$ are disjoint as shown by 
a quick analysis of the sets \eqref{E:setch} for $v_{\sigma_1}\cdots v_{\sigma_l}$ and $v_{\sigma'_1}\cdots v_{\sigma'_{l'}}$. 
\end{proof}

\begin{lemma}
For each $n$, 
$F^\alpha_n\colon \Gamma(n)\to A$ is equivariant between $\psi_n$ and $\alpha \res \Sbb_n$. 
\end{lemma}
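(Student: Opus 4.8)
The plan is to verify the identity $F_n^\alpha\circ\psi_n(g)=\alpha(g)\circ F_n^\alpha$ for every $g\in\Sbb_n$ by checking it on a spanning set. Both composites are bounded linear maps $\Gamma(n)\to A$ (each of $F_n^\alpha$, $\psi_n(g)$, and $\alpha(g)$ is bounded and linear, the first by Lemma~\ref{P:exta} and the latter two because they are unitary), so it suffices to prove agreement on the basic product vectors $v_{\sigma_1}\cdots v_{\sigma_l}$, which span $\Gamma(n)$. Fix such a vector, write $(p,q):=d(\sigma_1,\dots,\sigma_l)$, and fix a variant $\big((r_1,\dots,r_p),(t_1,\dots,t_q)\big)$ of $(\sigma_1,\dots,\sigma_l)$.

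First I would compute the left-hand side. By the definition of $\psi_n$ we have $\psi_n(g)(v_{\sigma_1}\cdots v_{\sigma_l})=c\cdot v_{\sigma_1}\cdots v_{\sigma_l}$, where the scalar $c:=g_{r_1}\cdots g_{r_p}\cdot g_{t_1}^{-1}\cdots g_{t_q}^{-1}\in\Tbb$ is independent of the chosen variant since $\Tbb$ is abelian. Applying the linear map $F_n^\alpha$ pulls the scalar out, so
\[
F_n^\alpha\big(\psi_n(g)(v_{\sigma_1}\cdots v_{\sigma_l})\big)=c\cdot\sqrt{\tfrac{2^{nl}}{p!\,q!}}\;\Big(\prod_{s\in 2^n}m_s!\Big)\cdot\chi_{\supt{\sigma_1,\dots,\sigma_l}}.
\]
For the right-hand side, note that $F_n^\alpha(v_{\sigma_1}\cdots v_{\sigma_l})$ lies in the summand $\widetilde{L^2}(\mu_{p,q})$, on which $\alpha(g)$ acts as $\rho_{p,q}(g)$, i.e.\ as multiplication by the function $G(a,b):=g(a_1)\cdots g(a_p)\cdot g(b_1)^{-1}\cdots g(b_q)^{-1}$ via \eqref{rho_eq}.

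The crux of the argument is to show that $G$ is constant and equal to $c$ on the support $\supt{\sigma_1,\dots,\sigma_l}$; granting this, multiplication by $G$ sends the vector $\sqrt{2^{nl}/(p!\,q!)}\big(\prod_s m_s!\big)\chi_{\supt{\sigma_1,\dots,\sigma_l}}$ to $c$ times itself, matching the left-hand side and completing the proof. To establish it, recall that $\supt{\sigma_1,\dots,\sigma_l}$ is the union over all variants of the cylinder products \eqref{E:setch}. On a single such piece $[r_1]\times\cdots\times[r_p]\times[t_1]\times\cdots\times[t_q]$ we have $a_i\in[r_i]$ and $b_j\in[t_j]$, and since $g\in\Sbb_n$ is constant with value $g_s$ on each level-$n$ cylinder $[s]$, we get $g(a_i)=g_{r_i}$ and $g(b_j)=g_{t_j}$; hence $G(a,b)=g_{r_1}\cdots g_{r_p}\cdot g_{t_1}^{-1}\cdots g_{t_q}^{-1}=c$ there. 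As every variant yields the same constant $c$ (again by commutativity of $\Tbb$, which makes the product independent of the ordering used in the variant), $G$ equals $c$ on all of $\supt{\sigma_1,\dots,\sigma_l}$.

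I do not expect a genuine obstacle here: the verification is essentially bookkeeping. The only point requiring care is the matching of the scalar by which $\psi_n(g)$ multiplies with the pointwise multiplier $G$ appearing in $\rho_{p,q}$. This hinges on two structural facts already available to us—that members of $\Sbb_n$ are constant on the level-$n$ cylinders $[s]$, and that $\Tbb$ is abelian so that the relevant products are independent of the chosen variant—so that the function $G$, which is only \emph{a priori} measurable, is in fact constant on the support of $F_n^\alpha(v_{\sigma_1}\cdots v_{\sigma_l})$ with the precise value $c$.
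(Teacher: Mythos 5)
Your proposal is correct and follows essentially the same route as the paper's proof: both reduce to checking the identity on basic product vectors, observe that $g\in\Sbb_n$ is constant on level-$n$ cylinders so the multiplier in \eqref{rho_eq} equals the constant $c=g_{r_1}\cdots g_{r_p}\,g_{t_1}^{-1}\cdots g_{t_q}^{-1}$ on all of $\supt{\sigma_1,\dots,\sigma_l}$ (independently of the variant by commutativity of $\Tbb$), and then match the two sides. No gaps.
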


\begin{proof}
It suffices to check equivariance at elements of our basis.
Suppose that $g$ is in $\Sbb_n$ and $v_{\sigma_1}\cdots v_{\sigma_l}$ is a basic product vector in $\Gamma(n)$.
Let $d$ be such that $F^\alpha_n(v_{\sigma_1} \cdots v_{\sigma_l}) = d\cdot \chi_{\supt{\sigma_1,\dots,\sigma_l}}$, 
and let $c$ be the common value of 
\[
g_{r_1} \cdots g_{r_p} \cdot g_{t_1}^{-1} \cdots g_{t_q}^{-1}
\]
for variants 
$\big((r_1,\dots,r_p),(t_1,\dots,t_q)\big)$ of $(\sigma_1,\dots,\sigma_l)$. 

If $(a_1, \dots, a_p, b_1, \dots, b_q)$ is an element of $\supt{\sigma_1,\dots,\sigma_l}$, then it belongs to the set 
\[
[r_1]\times \cdots\times [r_p]\times [t_1]\times \cdots \times[t_q]
\]
for some variant $\big((r_1,\dots,r_p),(t_1,\dots,t_q)\big)$ of $(\sigma_1,\dots,\sigma_l)$. Therefore, 
\[
g(a_1)\cdots g(a_p)\cdot g(b_1)^{-1}\cdots g(b_q)^{-1} = g_{r_1} \cdots g_{r_p} \cdot g_{t_1}^{-1} \cdots g_{t_q}^{-1} = c, 
\]
and this value does not depend on $(a_1, \dots, a_p, b_1, \dots, b_q)$ in $\supt{\sigma_1,\dots,\sigma_l}$. 

It now follows from (\ref{rho_eq}) that 
\[
\alpha(g)(\chi_{\supt{\sigma_1,\dots,\sigma_l}}) = \rho_{p,q}(\chi_{\supt{\sigma_1,\dots,\sigma_l}}) = c \cdot 
\chi_{\supt{\sigma_1,\dots,\sigma_l}},
\]
from which we get 
\begin{equation}\label{E:eqap}
\alpha(g)(F^\alpha_n(v_{\sigma_1},\dots,v_{\sigma_l})) = (c \cdot d) \cdot 
\chi_{\supt{\sigma_1,\dots,\sigma_l}}.
\end{equation}
On the other hand, by definition of $\psi_n$ and linearity of $F^\alpha_n$, we have
\[
F^\alpha_n(\psi_n(g)(v_{\sigma_1} \cdots v_{\sigma_l})) =  F^\alpha_n(c \cdot
v_{\sigma_1} \cdots v_{\sigma_l})
= (c \cdot d) \cdot 
\chi_{\supt{\sigma_1,\dots,\sigma_l}}, 
\]
which together with \eqref{E:eqap} imply equivariance. 
\end{proof}

\begin{lemma} 
$F_n^\alpha = F_{n+1}^\alpha \circ E_n$ 
\end{lemma}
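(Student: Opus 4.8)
The plan is to verify the identity on basic product vectors, since $E_n$, $F^\alpha_n$, and $F^\alpha_{n+1}$ are all (bounded) linear maps and the basic product vectors span $\Gamma(n)$. So I fix a basic product vector $v_{\sigma_1}\cdots v_{\sigma_l}$, write $(m_s)_{s\in 2^n}=m(\sigma_1,\dots,\sigma_l)$ and $(p,q)=d(\sigma_1,\dots,\sigma_l)$, and expand both sides. Applying the definition of $E_n$ and then $F^\alpha_{n+1}$ termwise gives $F^\alpha_{n+1}(E_n(v_{\sigma_1}\cdots v_{\sigma_l})) = 2^{-l/2}\sum_{\epsilon\in 2^l}F^\alpha_{n+1}(v_{\sigma_1\epsilon_1}\cdots v_{\sigma_l\epsilon_l})$. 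For each $\epsilon$ I record three facts: the sequence $(\sigma_i\epsilon_i)$ is again admissible (this uses admissibility of $(\sigma_i)$ together with the convention $\overline s\,\epsilon=\overline{s\epsilon}$); $d(\sigma_1\epsilon_1,\dots,\sigma_l\epsilon_l)=(p,q)$ is unchanged; and by Lemma~\ref{L:norep}\eqref{cat_norm_comp} the new multiplicities satisfy $\prod_{t\in 2^{n+1}}m'_t! = \prod_{s\in 2^n}k_s!(m_s-k_s)!$, with $k_s$ as in that lemma. Substituting the definition of $F^\alpha_{n+1}$ and combining the power of $2$ from $E_n$ with the one from $F^\alpha_{n+1}$ (the factor $2^{-l/2}$ against $2^{(n+1)l/2}$ produces $2^{nl/2}$), the right-hand side becomes $\sqrt{2^{nl}/(p!\,q!)}\sum_{\epsilon\in 2^l}\bigl(\prod_{s}k_s!(m_s-k_s)!\bigr)\chi_{\supt{\sigma_1\epsilon_1,\dots,\sigma_l\epsilon_l}}$.

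Next I would group the sum over $\epsilon$ by the tuple $(k_s)_{s\in 2^n}$, exactly as in the proof of Lemma~\ref{L:norep}\eqref{norm_cr}. For each tuple with $0\le k_s\le m_s$ there are $\prod_s\binom{m_s}{k_s}$ sequences $\epsilon$ producing it, and, crucially, all of them yield the \emph{same} set $\supt{\sigma_1\epsilon_1,\dots,\sigma_l\epsilon_l}$, which I denote $T_{(k_s)}$. This last point is where admissibility is used: for each fixed $s$ one cannot have both an unbarred occurrence $\sigma_i=s$ and a barred occurrence $\sigma_j=\overline s$ (that would give $\sigma_i=\overline{\sigma_j}$), so the coordinates touching $s$ are either all in the first block or all in the second, and hence the single number $k_s$ determines the full multiset of refined labels $\{\sigma_i\epsilon_i\}$ and therefore the set. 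Since $\binom{m_s}{k_s}k_s!(m_s-k_s)! = m_s!$, the coefficient of each $\chi_{T_{(k_s)}}$ collapses to $\prod_s m_s!$, and the right-hand side simplifies to $\sqrt{2^{nl}/(p!\,q!)}\,\bigl(\prod_s m_s!\bigr)\sum_{(k_s)}\chi_{T_{(k_s)}}$.

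It then remains to prove the geometric fact that the sets $T_{(k_s)}$, as $(k_s)$ ranges over all tuples with $0\le k_s\le m_s$, form a partition of $\supt{\sigma_1,\dots,\sigma_l}$; granting this, the final sum equals $\chi_{\supt{\sigma_1,\dots,\sigma_l}}$ and the right-hand side is exactly $F^\alpha_n(v_{\sigma_1}\cdots v_{\sigma_l})$, finishing the proof. This partition statement is the part requiring the most care and is the main obstacle. I would establish it in three steps. For \emph{containment}, each refining cylinder satisfies $[s\delta]\subseteq[s]$, so every variant product for $(\sigma_i\epsilon_i)$ sits inside a variant product for $(\sigma_i)$, whence $T_{(k_s)}\subseteq\supt{\sigma_1,\dots,\sigma_l}$. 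For \emph{covering}, given $x\in\supt{\sigma_1,\dots,\sigma_l}$ lying in a variant product $[r_1]\times\cdots\times[r_p]\times[t_1]\times\cdots\times[t_q]$, reading off the $(n+1)$-st digit of each coordinate yields digits $\epsilon$ with $x\in\supt{\sigma_1\epsilon_1,\dots,\sigma_l\epsilon_l}$. For \emph{disjointness}, I observe that for $x\in\supt{\sigma_1,\dots,\sigma_l}$ exactly $m_s$ coordinates have first $n$ digits equal to $s$, so the number $K_s(x)$ of these whose $(n+1)$-st digit is $0$ is well defined; membership $x\in T_{(k_s)}$ forces $k_s=K_s(x)$ for every $s$, so $x$ lies in precisely one of the sets. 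Together these three steps give the partition and complete the verification.
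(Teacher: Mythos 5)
Your proof is correct and takes essentially the same approach as the paper's: both reduce to basic product vectors, count the $\prod_{s}\binom{m_s}{k_s}$ sequences $\epsilon$ yielding the same refined product vector, invoke the identity $\binom{m_s}{k_s}\,k_s!\,(m_s-k_s)! = m_s!$, and rely on the fact that the sets $\supt{\sigma_1\epsilon_1,\dots,\sigma_l\epsilon_l}$ partition $\supt{\sigma_1,\dots,\sigma_l}$. The only differences are organizational (you sum the indicator functions globally, while the paper evaluates both sides on each partition piece) and that you spell out the partition fact, which the paper asserts without detail.
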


\begin{proof}
We need to see that 
\begin{equation}\label{E:nefr}
F_n^\alpha(v_{\sigma_1}\cdots v_{\sigma_l}) = \bigl(\frac{1}{\sqrt{2}}\bigr)^l \sum_{\epsilon\in 2^l}
F_{n+1}^\alpha( v_{\sigma_1\epsilon_1}\cdots v_{\sigma_l\epsilon_l}). 
\end{equation} 
We fix $v_{\sigma_1}\cdots v_{\sigma_l}$ as in \eqref{E:nefr}. 
Recall that the support of $F_n^\alpha(v_{\sigma_1}\cdots v_{\sigma_l})$ is 
\begin{equation}\label{E:lar}
\supt{ \sigma_1, \dots, \sigma_l}, 
\end{equation} 
and the support of $F_{n+1}^\alpha( v_{\sigma_1\epsilon_1}\cdots v_{\sigma_l\epsilon_l})$ for each $\epsilon\in 2^l$ 
is included in \eqref{E:lar}. 
Now consider sets of the form 
\begin{equation}\label{E:prop}
\supt{ \sigma_1\epsilon'_1, \dots, \sigma_l\epsilon'_l}
\end{equation} 
where $\epsilon'\in 2^l$. These sets form a partition of the set in \eqref{E:lar}. Thus, to check \eqref{E:nefr}, 
we need to see that both sides of \eqref{E:nefr} are equal on each set of the form \eqref{E:prop}. 

Fix $\epsilon'\in 2^l$ for the remainder of the proof. Set also 
\[
m(\sigma_1, \dots , \sigma_l) = (m_s)_{s\in 2^n}\;\hbox{ and }\; d(\sigma_1, \dots , \sigma_l) = (p,q). 
\]

The function $F_n^\alpha(v_{\sigma_1}\cdots v_{\sigma_l})$ is constant on \eqref{E:lar}, and therefore on \eqref{E:prop}. We denoted
this constant value by $c$.
For every $\epsilon \in 2^l$, 
the function $F_{n+1}^\alpha( v_{\sigma_1\epsilon_1}\cdots v_{\sigma_l\epsilon_l})$ is also constant on \eqref{E:prop}. 
This constant value is equal to $0$ if 
\[
v_{\sigma_1\epsilon_1}\cdots v_{\sigma_l\epsilon_l} \not= v_{\sigma_1\epsilon'_1}\cdots v_{\sigma_l\epsilon'_l}, 
\]
and is equal to some $d$, which is independent of $\epsilon$, if 
\begin{equation}\label{E:eqcr} 
v_{\sigma_1\epsilon_1}\cdots v_{\sigma_l\epsilon_l} = v_{\sigma_1\epsilon'_1}\cdots v_{\sigma_l\epsilon'_l}.
\end{equation} 
Note that if \eqref{E:eqcr} holds, then the numbers $k_s$, $s\in 2^n$, computed as in Lemma~\ref{L:norep}(ii) 
for $\epsilon \in 2^l$ do not depend on $\epsilon$. Furthermore, 
there are $\prod_{s\in 2^n} \binom{m_s}{k_s}$ 
choices for the sequence $\epsilon$ for which \eqref{E:eqcr} holds and 
$m(\sigma_1\epsilon_1, \dots, \sigma_l\epsilon_l) = (m'_t)_{t\in 2^{n+1}}$, where 
\begin{equation}\label{E:exm}
m'_{s0} = k_s\hbox{ and }m'_{s1} = m_s-k_s, \hbox{ for } s\in 2^n,
\end{equation}
for each such sequence $\epsilon$.

It follows from the information collected above that to check \eqref{E:nefr}, we need 
\begin{equation}\label{E:AA}
c = \left( \frac{1}{\sqrt{2}}\right)^l \left( \prod_{s\in 2^n} \binom{m_s}{k_s} \right) d. 
\end{equation}
By the definition of $F^\alpha_n$ and \eqref{E:exm}, we have 
\[
c= \sqrt{\frac{2^{nl}}{p!\,q!}} \prod_{s\in 2^n} (m_s!) 
 \qquad \hbox{ and } \qquad  
d =  \sqrt{\frac{2^{(n+1)l}}{p!\,q!}} \prod_{s\in 2^n} (k_s!\, (m_s-k_s)!), 
\]
which imply that \eqref{E:AA} becomes
\[
\sqrt{2^{nl}} \prod_{s\in 2^n} (m_s!) =
\left( \frac{1}{\sqrt{2}}\right)^l \left( \prod_{s\in 2^n} \binom{m_s}{k_s} \right)\sqrt{2^{(n+1)l}} \prod_{s\in 2^n} (k_s!\, (m_s-k_s)!).
\]
Checking this equality boils down to the tautology 
\begin{equation*}
m_s! =\binom{m_s}{k_s} (k_s!\, (m_s-k_s)!). \qedhere
\end{equation*}
\end{proof}

The next lemma completes our analysis of the maps $F^\alpha_n$.

\begin{lemma}
$\bigcup_n F_n^\alpha\big(\Gamma(n)\big)$ is dense in $A$.
\end{lemma}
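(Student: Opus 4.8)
The plan is to exploit the $\ell^2$-sum structure of $A$ and reduce to a single summand. Since $A=\bigoplus_{p,q}\widetilde{L^2}(\mu_{p,q})$ and a basic product vector $v_{\sigma_1}\cdots v_{\sigma_l}$ with $d(\sigma_1,\dots,\sigma_l)=(p,q)$ is sent by $F^\alpha_n$ into the summand $\widetilde{L^2}(\lambda^p\times\lambda^q)$, while vectors with different $d$-values land in different summands, it suffices to fix $(p,q)$ with $p+q>0$ and show that the union over $n$ of the images of those basic product vectors $v_{\sigma_1}\cdots v_{\sigma_l}$ with $l=p+q$ and $d(\sigma_1,\dots,\sigma_l)=(p,q)$ is dense in $\widetilde{L^2}(\lambda^p\times\lambda^q)$.

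Next I would identify these images concretely. For such a vector, $F^\alpha_n(v_{\sigma_1}\cdots v_{\sigma_l})$ is a strictly positive constant times $\chi_{\supt{\sigma_1,\dots,\sigma_l}}$, and $\supt{\sigma_1,\dots,\sigma_l}$ is by definition the union over all variants of an admissible product cylinder $[r_1]\times\cdots\times[r_p]\times[t_1]\times\cdots\times[t_q]$ with $r_i,t_j\in 2^n$; here admissibility translates into the requirement $\{r_1,\dots,r_p\}\cap\{t_1,\dots,t_q\}=\emptyset$. Let $P\colon L^2(\lambda^{p+q})\to\widetilde{L^2}(\lambda^p\times\lambda^q)$ be the orthogonal projection given by averaging over ${\rm Sym}(p)\times{\rm Sym}(q)$ acting by permutation of the first $p$ and last $q$ coordinates. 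A direct computation shows that $P$ applied to the indicator of an admissible product cylinder is a positive multiple of $\chi_{\supt{\sigma_1,\dots,\sigma_l}}$. Since $P$ is bounded with range exactly $\widetilde{L^2}(\lambda^p\times\lambda^q)$, it therefore suffices to prove that the indicators of admissible product cylinders, over all levels $n$, have dense linear span in $L^2(\lambda^{p+q})$; applying $P$ then yields density in $\widetilde{L^2}(\lambda^p\times\lambda^q)$.

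For this last claim I would start from the standard fact that the indicators of all product cylinders $[r_1]\times\cdots\times[r_p]\times[t_1]\times\cdots\times[t_q]$, as $n$ and the labels vary, have dense span in $L^2(\lambda^{p+q})$, since they generate the product Borel $\sigma$-algebra and form an algebra. It then remains to remove the admissibility restriction, and this is where the main work lies. A product cylinder fails to be admissible exactly when some $a$-label equals some $b$-label, say $r_i=t_j=s$. Using the refinement $[s]=[s0]\sqcup[s1]$ in the coordinates $i$ and $j$, the offending part of the cylinder splits into four pieces, two of which (carrying the labels $(s0,s1)$ and $(s1,s0)$ in coordinates $i,j$) are admissible at level $n+1$, while the remaining two again carry equal labels in those coordinates. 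Iterating the refinement, the non-admissible remainder is supported, in coordinates $i$ and $j$, on ever smaller cylinders whose intersection is the diagonal $\{a_i=b_j\}$, a $\lambda^{p+q}$-null set; handling several coinciding label pairs simultaneously is the same argument applied coordinatewise. Hence the indicator of any product cylinder lies in the $L^2$-closure of the span of admissible cylinder indicators, so the latter span is dense.

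The main obstacle is precisely the admissibility (off-diagonal) constraint addressed in the final paragraph: because the problematic cylinders sit on diagonals $\{a_i=b_j\}$, which carry no $\lambda^{p+q}$-mass, the refinement procedure approximates them by admissible cylinders and the constraint costs nothing in the limit. Everything else reduces to the routine density of cylinder indicators together with the boundedness and surjectivity of the symmetrizing projection $P$.
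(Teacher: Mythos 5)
Your proof is correct, but it follows a genuinely different route from the paper's. Both arguments begin the same way, by reducing to density in a single summand $\widetilde{L^2}(\lambda^p\times\lambda^q)$ and observing that the images of basic product vectors are positive multiples of the indicators $\chi_{\supt{\sigma_1,\dots,\sigma_l}}$. From there the paper quotients out the symmetry at the outset: it passes to the fundamental domain $K$ of lexicographically ordered tuples $(a_1,\dots,a_p,b_1,\dots,b_q)$ with all $a_i\neq b_j$, notes that the excluded diagonal set is $\lambda^{p+q}$-null, and checks that the traces $\supt{\sigma_1,\dots,\sigma_l}\cap K$ form a neighborhood basis at every point of $K$, so that their indicators generate the Borel structure and hence span a dense subspace. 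You instead work in the full $L^2(\lambda^{p+q})$ and push the two difficulties into two separate devices: the averaging projection $P$ onto the ${\rm Sym}(p)\times{\rm Sym}(q)$-invariant functions (which indeed sends an admissible cylinder indicator to a positive multiple of $\chi_{\supt{\sigma_1,\dots,\sigma_l}}$, with constant $\prod_s m_s!/(p!\,q!)$, and is a surjective contraction onto $\widetilde{L^2}$), and an explicit dyadic refinement showing that a non-admissible cylinder differs from a disjoint union of admissible level-$(n+k)$ cylinders by a set of measure at most a constant times $2^{-k}$. Both approaches are sound; the paper's is shorter because deleting the null diagonal and ordering the coordinates disposes of admissibility and symmetry in one stroke, while yours is more quantitative and makes the role of the off-diagonal condition (iii) in compatibility completely explicit. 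One small point worth spelling out in your write-up: when several label pairs coincide, a refined sub-cylinder is admissible only if \emph{every} coinciding pair receives distinct appended bits, so the estimate should be the union bound over all coinciding pairs, giving a non-admissible remainder of measure at most $(\#\text{pairs})\cdot 2^{-k}$ times the measure of the cylinder; this still tends to $0$, so the conclusion stands.
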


\begin{proof} Fix $p, q$ with $p, q\geq 0$ and $p+q>0$. Let $\leq_{\rm lx}$ be the standard 
lexicographic order on $2^\Nbb$. It suffices to show that functions in 
$\bigcup_n F_n^\alpha\big(\Gamma(n)\big)$ separate points of the set $K$ of all $(a_1, \dots, a_p, b_1, \dots, b_q)$ in 
$(2^\Nbb)^p\times (2^\Nbb)^q$ such that 
\[
a_1\leq_{\rm lx} \cdots \leq_{\rm lx} a_p,\, b_1\leq_{\rm lx} \cdots \leq_{\rm lx} b_q, \hbox{ and } a_i\not= b_j,\hbox{ for all } 
1\leq i\leq p,\, 1\leq j\leq q
\]
since the complement of $K$ in the set of all 
$(a_1, \dots, a_p, b_1, \dots, b_q)$ in $(2^\Nbb)^p\times (2^\Nbb)^q$ with 
\[
a_1\leq_{\rm lx} \cdots \leq_{\rm lx} a_p\hbox{ and } b_1\leq_{\rm lx} \cdots \leq_{\rm lx} b_q
\]
has $\lambda^{p+q}$-measure zero. The separation of points is evident since sets of the form 
\[
\supt{ \sigma_1, \dots, \sigma_{p+q}} \cap K,
\]
where 
\[
\sigma_i = a_i\res n, \hbox{ for }1\leq i\leq p,\; \hbox{ and }\; \sigma_{p+j}= \overline{b_j\res n}, \hbox{ for } 1\leq j\leq q,
\]
with $n$ large enough so that $a_i\res n \not= b_j\res n$, for all $1\leq i\leq p,\, 1\leq j\leq q$,
constitute a topological basis in $K$ at the point $(a_1, \dots, a_p, b_1, \dots, b_q)\in K$. 
\end{proof}

\subsection*{Step \ref{F^b_n}: The maps $F_n^\beta:\Gamma(n) \to B$ and their properties}\label{S:boo} 

If $s \in 2^n$ for some $n$, define $z_{\bar s} = \bar z_s$.
We define a Hilbert space embedding $F_n^\beta:\Gamma(n) \to B$. 
For each basic product vector $v_{\sigma_1}\cdots v_{\sigma_l}$ in $\Gamma(n)$, set 
\[
F^\beta_n(v_{\sigma_1}\cdots v_{\sigma_l}) : =  \prod_{i=1}^l z_{\sigma_i}.
\] 
\begin{lemma}\label{L:extb} 
$F^\beta_n$ extends to a Hilbert space embedding $\Gamma(n) \to B$, which we again denote by $F^\beta_n$. 
\end{lemma}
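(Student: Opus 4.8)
The plan is to follow the template already used for Lemma~\ref{L:En_emb} and Lemma~\ref{P:exta}: since the basic product vectors form an orthogonal basis of $\Gamma(n)$, it suffices to show that $F_n^\beta$ preserves both the norm and the orthogonality of basic product vectors. Every computation will reduce to the moment formula \eqref{inner_prod} together with the fact that $\{z_s \mid s\in 2^n\}$ is a mutually independent family. To set up, I would fix a basic product vector $v_{\sigma_1}\cdots v_{\sigma_l}$ and group the factors of $\prod_{i=1}^l z_{\sigma_i}$ according to the underlying $s\in 2^n$. Writing $a_s$ for the number of $i$ with $\sigma_i=s$ and $b_s$ for the number with $\sigma_i=\overline{s}$, admissibility says precisely that for each $s$ at most one of $a_s,b_s$ is nonzero, while $a_s+b_s=m_s$ where $(m_s)_s=m(\sigma_1,\dots,\sigma_l)$. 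Recalling $z_{\overline{s}}=\overline{z_s}$, this gives $F_n^\beta(v_{\sigma_1}\cdots v_{\sigma_l})=\prod_{s\in 2^n} z_s^{a_s}\overline{z_s}^{\,b_s}$.

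For norm preservation, I would use $|z_s^{a_s}\overline{z_s}^{\,b_s}|^2=|z_s|^{2m_s}$ and the independence of the $z_s$ across distinct $s\in 2^n$ to factor the norm squared as $\prod_s \int |z_s|^{2m_s}\,d\gamma_\infty=\prod_s \int z_s^{m_s}\overline{z_s}^{\,m_s}\,d\gamma_\infty$. By \eqref{inner_prod} each factor equals $m_s!$, so $\|F_n^\beta(v_{\sigma_1}\cdots v_{\sigma_l})\|^2=\prod_{s\in 2^n}(m_s!)$, which is exactly $\|v_{\sigma_1}\cdots v_{\sigma_l}\|^2$ by Lemma~\ref{L:norep}\eqref{norm_comp}. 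Hence $F_n^\beta$ is norm preserving on the basis.

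For orthogonality, given a second basic product vector $v_{\sigma'_1}\cdots v_{\sigma'_{l'}}$ with counts $(a'_s,b'_s)$, I would compute the inner product $\langle F_n^\beta(v_{\sigma_1}\cdots v_{\sigma_l}),\,F_n^\beta(v_{\sigma'_1}\cdots v_{\sigma'_{l'}})\rangle = \int \prod_{s\in 2^n} z_s^{a_s+b'_s}\,\overline{z_s}^{\,b_s+a'_s}\,d\gamma_\infty$, which again factors over $s$ by independence. By \eqref{inner_prod} the $s$-factor vanishes unless $a_s+b'_s=b_s+a'_s$, that is, $a_s-b_s=a'_s-b'_s$. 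The crux of the whole lemma lies in the next observation: admissibility forces $\min(a_s,b_s)=0=\min(a'_s,b'_s)$, and under that constraint the signed difference $a_s-b_s$ determines the pair $(a_s,b_s)$ uniquely, so $a_s-b_s=a'_s-b'_s$ already yields $(a_s,b_s)=(a'_s,b'_s)$. Thus a nonzero inner product would force the two multiplicity data to agree for every $s$, i.e. the two basic product vectors to coincide; contrapositively, distinct basic product vectors have orthogonal images.

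I expect the only genuinely delicate point to be this last step, where admissibility is exactly what makes the argument go through: without it, the factors $z_s$ and $\overline{z_s}$ could both be present and partially cancel inside a single coordinate, which would simultaneously spoil the clean norm identity and the injectivity needed for orthogonality. Everything else is routine bookkeeping with \eqref{inner_prod} and the independence of the $z_s$.
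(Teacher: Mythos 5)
Your proposal is correct and follows the same route as the paper: reduce to norm and orthogonality of basic product vectors, compute the norm via independence and \eqref{inner_prod} to get $\prod_{s\in 2^n}(m_s!)$, and deduce orthogonality from the vanishing of mixed moments. The only difference is that the paper disposes of orthogonality with a one-line appeal to \eqref{inner_prod}, whereas you spell out the factorization over $s\in 2^n$ and the observation that admissibility forces $\min(a_s,b_s)=0$, so that $a_s-b_s$ determines $(a_s,b_s)$ --- a detail the paper leaves implicit but which is indeed exactly where admissibility is used.
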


\begin{proof}
It suffices to show that $F^\beta_n$ preserves the norm
and orthogonality of the basic vectors $v_{\sigma_1}\cdots v_{\sigma_l}$ from $\Gamma(n)$.
By
\eqref{inner_prod} and the independence of the random variables
$\{z_s \mid s \in 2^n\}$, 
we have that if $m(\sigma_1,\cdots ,\sigma_l) = (m_s)_{s\in 2^n}$, then 
\[
 \| F^\beta_n( v_{\sigma_1}\cdots v_{\sigma_l}) \|^2 = 
\int_{X_\infty} |z_{\sigma_1}\cdots z_{\sigma_l}|^2\, d\gamma_\infty 
= \prod_{s\in 2^n} \int_{X_\infty} |z_s|^{2m_s} \,d\gamma_\infty
= \prod_{s\in 2^n}  (m_s!).
\]
So $F^\beta_n$ preserves
the norm of basic product vectors in $\Gamma(n)$.
Furthermore, \eqref{inner_prod} implies any two distinct vectors of the form
$z_{\sigma_1} \cdots  z_{\sigma_l}$ with $l \geq 1$ are orthogonal. 
\end{proof}

It is immediate from the definitions that, for each $n \in \Nbb$,
$F^\beta_n\colon \Gamma(n)\to B$ is $\Sbb_n$-equivariant
between $\psi_n$ and $b\res \Sbb_n$ and also that
$F_n^\beta = F_{n+1}^\beta\circ E_n$.
The next lemma completes the proof of Theorem~\ref{T:exp}.

\begin{lemma}\label{L:offde} 
$\bigcup_n F^\beta_n(\Gamma(n))$ is dense in $B$.
\end{lemma}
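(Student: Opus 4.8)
The goal is to prove that $\overline{\operatorname{span}}\,\bigcup_n F_n^\beta(\Gamma(n))=B$. Each basis image $F_n^\beta(v_{\sigma_1}\cdots v_{\sigma_l})=\prod_{i=1}^l z_{\sigma_i}$ has $l\geq 1$ and, by admissibility, every index $s\in 2^n$ carries only $z_s$ or only $\bar z_s$ with positive exponent; hence its integral vanishes by \eqref{inner_prod}, so these vectors already lie in $B$. Consequently the lemma is equivalent to
\[
\overline{\operatorname{span}}\Bigl(\{1\}\cup\bigcup_n F_n^\beta(\Gamma(n))\Bigr)=L^2(\gamma_\infty).
\]
Write $W_n:=F_n^\beta(\Gamma(n))$; by construction $W_n$ is spanned by the \emph{admissible} monomials $\prod_{s\in 2^n} z_s^{a_s}\bar z_s^{b_s}$, namely those in which no index $s$ carries both a $z_s$ and a $\bar z_s$ factor, and set $\overline W:=\overline{\operatorname{span}}(\{1\}\cup\bigcup_n W_n)$. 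The plan is to show, first, that the \emph{ordinary} monomials $\prod_{s\in 2^n}z_s^{a_s}\bar z_s^{b_s}$ (arbitrary exponents, all $n$) are total in $L^2(\gamma_\infty)$, and second, that every such monomial in fact lies in $\overline W$; together these give $\overline W=L^2(\gamma_\infty)$.

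Totality is the routine part. Let $\mathcal F_n$ be the $\sigma$-algebra generated by $\{z_s:s\in 2^n\}$. Since $z_s=(z_{s0}+z_{s1})/\sqrt2$, every $z_s$ with $|s|\leq n$ is $\mathcal F_n$-measurable, so the $\mathcal F_n$ increase to the full Borel $\sigma$-algebra of $X_\infty$ (the functions $z_s$, $s\in\Sigma$, generate it), and hence the subspaces of $\mathcal F_n$-measurable functions are dense in $L^2(\gamma_\infty)$ by martingale convergence. Within the $\mathcal F_n$-measurable subspace, which is the $L^2$-space of the finitely many independent complex Gaussians $\{z_s:s\in 2^n\}$, polynomials in the $z_s$ and $\bar z_s$ are dense by the standard density of polynomials in a finite-dimensional Gaussian $L^2$. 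Thus ordinary monomials are total.

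The substance of the lemma is the second assertion, and here the only obstruction is the presence of ``diagonal'' factors $z_s\bar z_s=|z_s|^2$. I would argue by induction on the total degree $l=\sum_s(a_s+b_s)$, the cases $l\leq 1$ being immediate since a monomial of degree $0$ is the constant $1$ and one of degree $1$ is already admissible. For $l\geq 2$, refine every factor of the monomial $M=\prod_{s\in 2^n}z_s^{a_s}\bar z_s^{b_s}$ simultaneously to level $n+k$ via $z_s=2^{-k/2}\sum_{t\in 2^k}z_{st}$ and $\bar z_s=2^{-k/2}\sum_{t'\in 2^k}\bar z_{st'}$, and expand the product. Writing the result as $M=\Sigma^{(k)}_{\mathrm{adm}}+\Sigma^{(k)}_{\mathrm{mix}}$, the admissible part $\Sigma^{(k)}_{\mathrm{adm}}$ collects the terms in which the holomorphic and antiholomorphic factors occupy \emph{disjoint} level-$(n+k)$ cylinders; these are admissible monomials, so $\Sigma^{(k)}_{\mathrm{adm}}\in W_{n+k}\subseteq\overline W$. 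The mixed part $\Sigma^{(k)}_{\mathrm{mix}}$ collects the terms in which some cylinder $u\in 2^{n+k}$ carries both a $z_u$ and a $\bar z_u$ factor. The prototype is $z_s\bar z_s=2^{-k}\sum_{t\neq t'}z_{st}\bar z_{st'}+2^{-k}\sum_{t\in 2^k}|z_{st}|^2$, whose first sum is admissible and whose second sum tends to $1$ in $L^2$ because the $|z_{st}|^2$ are i.i.d.\ with mean $1$ and variance $\operatorname{Var}(|z|^2)=2!-1=1$, giving $L^2$-error $2^{-k/2}\to 0$; this already shows $|z_s|^2-1\in\overline W$.

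The crux, and the main obstacle, is the claim that $\Sigma^{(k)}_{\mathrm{mix}}$ converges in $L^2$, as $k\to\infty$, to a fixed linear combination $R$ of monomials of total degree strictly less than $l$. The mechanism is exactly the $L^2$ law of large numbers seen in the prototype: each cylinder shared between a $z$ and a $\bar z$ factor contributes an averaged block $2^{-k}\sum_{t}|z_{ut}|^2$ which converges to its mean $1$ with $L^2$-error of order $2^{-k/2}$, thereby contracting that diagonal pair down to a constant and leaving a monomial of smaller degree; the combinatorial coefficients arising from the expansion stay bounded, and the errors sum to something tending to $0$ (this is cleanest when phrased in the complex Wiener--It\^o chaos, where admissible monomials are precisely the Wick monomials with disjoint supports, the distinct-cylinder pieces are orthogonal, and the error norm collapses to a sum over cylinders of products of factors $\|h\,\chi_{[u]}\|^2$, each vanishing as the mass placed on a single cylinder does). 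Granting this convergence, $R\in\overline W$ by the inductive hypothesis, and since $\Sigma^{(k)}_{\mathrm{adm}}=M-\Sigma^{(k)}_{\mathrm{mix}}\to M-R$ with each $\Sigma^{(k)}_{\mathrm{adm}}\in\overline W$ and $\overline W$ closed, we conclude $M-R\in\overline W$, hence $M=(M-R)+R\in\overline W$. This completes the induction and the proof.
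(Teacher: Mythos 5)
Your overall architecture is sound and is in fact parallel to the paper's: reduce to showing that the span of $1$ together with the ``admissible'' monomials (no index carrying both a $z$ and a $\bar z$) is dense in $L^2(\gamma_\infty)$; get totality of arbitrary monomials by a soft argument; then approximate an arbitrary monomial by admissible ones via refinement $z_s = 2^{-k/2}\sum_t z_{st}$ and an $L^2$ law of large numbers for the diagonal contributions. The gap is exactly where you flag it: the claim that $\Sigma^{(k)}_{\mathrm{mix}}$ converges in $L^2$ to a \emph{fixed} linear combination $R$ of strictly lower-degree monomials is asserted, not proved. In the generality in which you invoke it -- a simultaneous refinement of \emph{all} factors of a multi-index monomial $\prod_s z_s^{a_s}\bar z_s^{b_s}$ -- this statement is essentially the diagram/Wick formula for complex Gaussian chaos together with a convergence estimate for off-diagonal approximations of Wick products. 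It is true (your $|z_s|^2$ prototype generalizes, and the limit of the admissible part is the Wick-ordered monomial), but the bookkeeping is nontrivial: the number of ``mixed'' terms grows like $2^{k(l-1)}$, the terms with a shared cylinder are grouped by contraction pattern, and one must show that after each contraction the residual sums reconstitute lower-degree monomials in the \emph{original} variables exactly (via $z_s = 2^{-k/2}\sum_t z_{st}$) while the exclusion corrections and variance terms vanish. None of this is carried out, and it is the entire content of the lemma.

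The paper's proof is worth comparing because it is organized precisely so as to avoid this global combinatorial analysis. It first proves a product claim: if $S_0, S_1 \subseteq 2^n$ are disjoint and $f_i \in P_{S_i}\cap\cl(Q_{S_i})$, then $f_0 f_1 \in \cl(Q_{S_0\cup S_1})$ -- a two-line estimate using independence. This reduces everything to the single-variable statement $z_s^k\bar z_s^m \in \cl(Q_{\{s\}})$, proved by induction on $k+m$. In the refined expansion of $z_s^k\bar z_s^m$, every term whose multi-index $\vec t$ is \emph{non-constant} factors as a product, over at least two distinct cylinders $su_i$, of monomials $z_{su_i}^{k_i}\bar z_{su_i}^{m_i}$ each of degree $<k+m$; these are handled by the induction hypothesis plus the product claim, with no limit needed. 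The only surviving piece is the purely diagonal remainder $r_l = 2^{-l(k+m)/2}\sum_{t\in 2^l} z_{st}^k\bar z_{st}^m$, which is a sum of $2^l$ pairwise orthogonal (after centering) identically distributed terms, so a one-line variance computation shows $r_l$ converges to a constant. In other words, the two-step factorization (single coordinate first, then products across disjoint coordinates via independence) localizes the law-of-large-numbers estimate to the one place where it is trivial. To complete your proof you would either need to supply the Wiener--It\^o/diagram argument you gesture at, or restructure along these lines.
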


\begin{proof}
Let $C$ denote the collection of all constant functions in $L^2(\gamma_\infty)$,
noting that $L^2(\gamma_\infty) = B+C$. 
If $S \subseteq 2^n$, let $P_S$ denote the span of all products
of the form $\prod_{s} z_s^{m_s}$
where $s$ ranges over $S \cup \overline{S}$ and
$m_s \geq 0$ for all such $s$.
Here we allow the product of length 0, which we define to be equal to 1.
Define $Q_S \subseteq P_S$ to consist of the span all such products which moreover satisfy that
$m_s + m_{\bar s} \leq 1$ for all $s \in S$.
Both $P_S$ and $Q_S$ contain $C$ by convention.
If $S = 2^n$, we will write $P$ and $Q$ for $P_S$ and $Q_S$.

In order to show that $\bigcup_n F^\beta_n(\Gamma(n))$ is dense in $B$, it suffices to show that
$\bigcup_n F^\beta_n(\Gamma(n)) + C$ is dense in $B + C = L^2(\gamma_\infty)$.
Since $Q \subseteq \bigcup_n F^\beta_n(\Gamma(n)) + C$, it will be sufficient to show that
$Q$ is dense in $P$ and $P$ is dense in $L^2(\gamma_\infty)$.

\begin{claim}\label{separated_product}
Suppose that $S_0,S_1 \subseteq 2^n$ are disjoint sets.
If $f_i$ is in $P_{S_i} \cap \cl(Q_{S_i})$, 
then $f_0 f_1$ is in $\cl(Q_{S_0 \cup S_1})$.
\end{claim}

\begin{proof}
Let $1 \geq \epsilon > 0$ be arbitrary.
For $i=0,1$ let $g_i \in Q_{S_i}$ be such that $\|f_i - g_i\| < \epsilon/(\|f_0\| + \|f_1\| + 1)$.
Note in particular that since $\|g_1\| \leq \|f_1 - g_1\| + \|f_1\|$, this implies $\|g_1\| \leq \|f_1\| + 1$.
Since $f_0$ and $f_1-g_1$ are independent, we have that
\begin{equation}\notag
\|f_0 f_1 - f_0 g_1\| = \|f_0\| \cdot \|f_1 - g_1\| < \|f_0\| \frac{\epsilon}{\|f_0\| + \|f_1\| + 1}.
\end{equation} 
Similarly, since $g_1$ and $f_0 - g_0$ are independent
\begin{equation}\notag
\|f_0 g_1 - g_0 g_1\| = \|f_0- g_0\| \cdot \|g_1\| < \frac{\epsilon}{\|f_0\| + \|f_1\| + 1} (\|f_1\| + 1) .
\end{equation} 
Thus, 
\[
\|f_0 f_1 - g_0 g_1\|  <
\frac{\|f_0\| \epsilon}{\|f_0\| + \|f_1\| + 1} + \frac{(\|f_1\|+1) \epsilon}{\|f_0\| + \|f_1\| + 1} = \epsilon.
\]
Since $\epsilon$ was arbitrary, $f_0 f_1$ is in the closure of $Q_{S_0 \cup S_1}$. 
\end{proof}

\begin{claim}\label{Q_dense_base}
For any $s \in 2^n$, $z_s^k  \bar z_s^m$ is in $\cl(Q_{\{s\}})$.
\end{claim}

\begin{proof}
The proof is by induction on $k+m$.
The base cases $k+m = 0,1$ follow from the fact that $z_s$, $\bar z_s$, and the constant functions are in $Q_S$.
Suppose that $k+m > 1$ and let $l \geq 0$ be fixed for the moment.
Observe 
$$
z_s^k \bar z_s^m = 2^{-l(k+m)/2} \sum_{\vec{t} \in (2^l)^{k+m}} \prod_{i =1}^k z_{s t_i} \prod_{i=1}^m \bar z_{s t_{k+i}}
$$
If $\vec{t} \in (2^l)^{k+m}$ is not a constant sequence, then let $\Seq{u_i \mid 1 \leq i \leq j}$ be an enumeration
of $\{t_i \mid 1 \leq i \leq k+m\}$ without repetition so that
$$
\prod_{i=1}^k z_{s t_i} \prod_{i=1}^m \bar z_{s t_{k+i}}
=
\prod_{i=1}^j z_{s u_i}^{m_i} \bar z_{s u_i}^{n_i}
$$
where $0 \leq k_i , m_i$ and $0 < k_i + m_i$ for each $1 \leq i \leq j$.
Note that $j > 1$ by our assumption that $\vec{t}$ is not constant and since $k+m = \sum_{i=1}^j k_i + m_i$ with each term positive,
it must be that $k_i + m_i < k + m$ for all $1 \leq i \leq j$.
Thus by our induction hypothesis, $z_{s u_i}^{k_i} \bar z_{s u_i}^{m_i}$ is in $Q_{\{s u_i\}}$ for each
$1 \leq i \leq j$.
By iteratively applying Claim \ref{separated_product},
$$
\prod_{i=1}^k z_{s t_i} \prod_{i=1}^m \bar z_{s t_{m+i}}
=
\prod_{i=1}^j z_{s u_i}^{k_i} \bar z_{s u_i}^{m_i}
$$
is in $\cl(Q_{\{s u_i \mid 1 \leq i \leq j\}}) \subseteq \cl(Q_{\{s\}})$.

The sum of the remaining terms from the expansion of $z_s^k \bar z_s^m$ is 
$$r_l = 2^{-l(k+m)/2} \sum_{t \in 2^l} z_{s t}^k \bar z_{st}^m.$$
Since $Q_S$ contains the constant functions $C$, it suffices to show that
$r_l -c \to 0$ converges to 0 for some $c \in C$.
 
If $k \ne m$, then by \eqref{inner_prod} $\{z_{st}^k \bar z_{st}^m \mid t \in 2^l \}$ 
consists of elements of $B$ and is pairwise orthogonal.
Moreover
$$\|r_l\|^2 = 2^{-l(k+m)} \sum_{t \in 2^l} \|z_{st}^k \bar z_{st}^m\|^2 = (k+m)!\ 2^{-l(k+m-1)}.$$
In this situation as $l \to \infty$, $\|r_l\| \to 0$.

If $k = m$, then by \eqref{inner_prod} $\{z_{st}^m \bar z_{st}^m - \sqrt{m!} \mid t \in 2^l\}$ consists of elements of $B$ and is pairwise orthogonal.
Moreover $$\|r_l - \sqrt{m!}\|^2 =
2^{-2lm} \sum_{t \in 2^l} \|z_{st}^m \bar z_{st}^m - \sqrt{m!}\|^2 = 2^{-l (2m-1)} ((2m)! - (m!)^2). 
$$
Again, as $l \to \infty$, $\|r_l - \sqrt{m!}\| \to 0$.
\end{proof}

\begin{claim} \label{Q_dense}
$Q$ is dense in $P$.
\end{claim}

\begin{proof}
Since $P$ is closed under linear combinations, it suffices to show that if $S \subseteq 2^k$ for some $k$,
then any product of the form $\prod_{s \in S} z_s^{m_s} \bar z_s^{n_s}$ is in $\cl(Q_S)$.
This is established by induction on the cardinality of $S$.
If $S$ is empty, this is a consequence of the fact that $Q$ contains all constant functions.
If $S$ is non-empty, let $t \in S$ and set $S' = S \setminus \{t\}$.
By induction $\prod_{s \in S'}  z_s^{m_s} \bar z_s^{n_s}$ is in $P_{S'} \cap \cl(Q_{S'})$ and
by Claim \ref{Q_dense_base}, $z_t^{m_t} \bar z_t^{n_t}$ is in $P_{\{t\}} \cap \cl(Q_{\{t\}})$.
By Claim \ref{separated_product}, $\prod_{s \in S} z_s^{m_s} \bar z_s^{n_s}$ is in $\cl(Q_S)$.
\end{proof}

Now we finish the proof of the lemma.
As noted above, it is now sufficient to show that $P$ is dense in $L^2(\gamma_\infty)$.
Since $X_n$ is locally compact, the Stone-Weierstrass theorem implies that the polynomials
in $\{z_s,\bar z_s \mid s \in 2^n\}$ are dense in $L^2(\gamma_n)$.
Since $X_\infty$ is an inverse limit of the spaces $X_n$,
$P$ is dense
in $L^2(\gamma_\infty)$.
\end{proof}

\section{Constraints on the spectral form of Koopman representations}\label{Su:sem} 

In order to state the constraint on the spectral forms of Koopman representations of
$L^0(\lambda, \Tbb)$ proved in \cite{Sol2} 
and explain 
its connection with Theorem~\ref{T:koo}, we need to introduce semigroup structures on the objects involved in spectral forms. 

We can identify $\Nbb[\Zbb^\times]$ with the collection of all finitely supported
functions from $\Zbb^\times$ into $\Nbb \cup \{0\}$. This identification allows us to regard $\Nbb[\Zbb^\times]$ as a semigroup
with the operation $\oplus$ of coordinatewise addition.
There is an action of the semigroup $\Zbb^\times$ on
$\Nbb[\Zbb^\times]$.
For $x\in \Nbb[\Zbb^\times]$ and 
$m \in \Zbb^\times$, let $m x$ be the element $\Nbb[\Zbb^\times]$ such that 
${\rm dom}(m x) = \{ mn \mid n\in {\rm dom}(x)\}$ and,  for $n\in {\rm dom}(m x)$, 
\[
(m x)(n) := x(n/m). 
\]

The space $C_{x\oplus y}$ can be identified with $C_x\times C_y$ in several ways. To make these identifications precise, 
define $I$ to be the set of all ordered pairs
${\bar \iota}= (\iota^x, \iota^y)$ such that
\[
\iota^x\colon D(x)\to D(x\oplus y) \qquad \qquad 
\iota^y\colon D(y)\to D(x\oplus y)
\]
are injections which fix the first coordinate and whose ranges partition $D(x \oplus y)$.
For each $\bar \iota \in I$, define a homeomorphism $h_{\bar\iota}\colon C_x\times C_y \to  C_{x\oplus y}$ by
\[
h_{\bar\iota}(a, b) (k,j):=
\begin{cases}
a(k,i) & \textrm{ if } \iota^x(k,i) = (k,j) \\
b(k,i) & \textrm{ if } \iota^y(k,i) = (k,j)
\end{cases}
\]
For $\mu$ compatible with $x$ and $\nu$ compatible with $y$, we define 
\[
\mu\otimes \nu = 
\sum_{\bar\iota \in I} (h_{\bar\iota})_*(\mu\times \nu).
\]
It was proved in \cite{Sol2} that if $\mu$ and $\nu$ are measures compatible with $x$ and $y$, respectively, then 
$\mu\otimes \nu$ is compatible with $x\oplus y$.

There is a natural homeomorphism between $C_x$ and $C_{mx}$. 
For $m \in \Zbb^\times$, define $e_{x,m}\colon C_x\to  C_{m x}$ by
$$e_{x,m}(a)(m k,i) := a(k,i).$$ 
For $\mu$ compatible with $x$, set 
$m \mu := (e_{x,m})_*(\mu)$.
It was observed in \cite{Sol2} that if $\mu$ is a measure compatible with $x$ and $m \in \Zbb^\times$, 
then $m\mu$ is compatible with $m x$.

Theorem~\ref{T:last}, proved in \cite{Sol2}, gives the additional condition fulfilled by the spectral form in case 
the unitary representation of $L^0(\lambda, \Tbb)$ is a Koopman representation.

\begin{theorem}[\cite{Sol2}]\label{T:last}
Assume that $(\mu^j_x)_{x,j}$
determines the spectral form of 
a Koopman representation associated with  an ergodic boolean action of $L^0(\lambda, \Tbb)$. 
Then, for $m_1, \dots , m_n\in \{1,-1\}$
and $x_1, \dots , x_n\in \Nbb[\Zbb^\times]$, 
we have 
\begin{equation}\label{E:cond}
m_1\mu^1_{x_1} \otimes \cdots \otimes m_n\mu^1_{x_n} \preceq \mu^1_x, \hbox{ where }x= m_1x_1 \oplus \cdots \oplus  m_n x_n.
\end{equation}
\end{theorem}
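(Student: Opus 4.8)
The plan is to exploit the structure that a Koopman representation carries beyond that of an arbitrary unitary representation: the Hilbert space $B=L^2_0$ of the underlying probability space carries a densely defined associative and commutative multiplication coming from pointwise multiplication of functions, the constant $1$ lives in the ambient $L^2$, complex conjugation is available, and, crucially, the Koopman action is by algebra automorphisms, so that $\beta(g)(fh)=\beta(g)(f)\,\beta(g)(h)$ and $\beta(g)(\bar f)=\overline{\beta(g)(f)}$ for all $g$. Reading the spectral form of Theorem~\ref{T:rep} through this lens, I would interpret the two operations on measures in \eqref{E:cond} as shadows of these algebraic operations: multiplying a vector of spectral type $\mu$ (compatible with $x$) by one of type $\nu$ (compatible with $y$) should yield a vector of type $\mu\otimes\nu$ (compatible with $x\oplus y$), while conjugating a vector of type $\mu$ compatible with $x$ should yield one of type $m\mu$ with $m=-1$, compatible with $-x$. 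The restriction $m_i\in\{1,-1\}$ is precisely the statement that the identity and conjugation are the only algebra operations keeping us inside the Koopman algebra while sending a weight $x_i$ to $\pm x_i$.

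First I would make precise the notion of weight and the maximality of $\mu^1_x$. For each $x$ let $H_x\subseteq B$ denote the $x$-isotypic closed subspace, on which $L^0(\lambda,\Tbb)$ acts through $R_x$; in the spectral form $H_x=\bigoplus_j\widetilde{L^2}(\mu^j_x)$. From the chain condition $\mu^{j+1}_x\preceq\mu^j_x$ it is immediate that every vector of $H_x$ has spectral type $\preceq\mu^1_x$. Consequently the whole task reduces to producing, inside $B$, a single vector which lies in $H_x$ for $x=m_1x_1\oplus\cdots\oplus m_nx_n$ and whose spectral type dominates $m_1\mu^1_{x_1}\otimes\cdots\otimes m_n\mu^1_{x_n}$: membership in $H_x$ forces that type to be $\preceq\mu^1_x$, and the domination then yields \eqref{E:cond}.

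To build this vector I would pick, for each $i$, a vector $\xi_i\in H_{x_i}$ of spectral type (equivalent to) $\mu^1_{x_i}$, replace $\xi_i$ by $\overline{\xi_i}\in H_{-x_i}$ when $m_i=-1$, and form the product $\xi_1\cdots\xi_n$. Multiplicativity of the Koopman action makes the product isotypic of weight $m_1x_1\oplus\cdots\oplus m_nx_n=x$, so it lands in $H_x$. The heart of the argument, and the step I expect to be the main obstacle, is to identify the spectral type of $\xi_1\cdots\xi_n$ with $\bigotimes_i m_i\mu^1_{x_i}$, in particular the lower bound. Here one computes the positive-definite function $g\mapsto\langle\beta(g)(\xi_1\cdots\xi_n),\xi_1\cdots\xi_n\rangle$, which by multiplicativity and measure preservation is the integral of a product of the individual transforms; the factorization produces $\mu^1_{x_1}\times\cdots\times\mu^1_{x_n}$, and the several ways the coordinate systems $D(x_i)$ embed disjointly into $D(x)$ — exactly the index set $I$ and the homeomorphisms $h_{\bar\iota}$ of the excerpt — are what symmetrize this into the sum $\sum_{\bar\iota}(h_{\bar\iota})_*(\cdots)$ defining $\otimes$. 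The good-permutation invariance and the off-diagonal vanishing (compatibility conditions (ii) and (iii)) are what guarantee that the distinct interleavings contribute genuinely and that the computation closes up to mutual absolute continuity.

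The one genuine technical nuisance is that pointwise products of $L^2$ functions need not lie in $L^2$, whereas the vectors realizing the full measures $\mu^1_{x_i}$ are given only abstractly and need not be bounded in the concrete model. I would handle this by approximation: bounded vectors are dense in each $H_{x_i}$, so I choose bounded $\xi_i^{(k)}$ accumulating to a cyclic vector of type $\mu^1_{x_i}$, so that the spectral types $\nu_i^{(k)}$ satisfy $\bigvee_k\nu_i^{(k)}\equiv\mu^1_{x_i}$. The products $\xi_1^{(k)}\cdots\xi_n^{(k)}$ are then honestly in $B$, have spectral type $\equiv\bigotimes_i m_i\nu_i^{(k)}$ by the computation above, and lie in $H_x$, whence $\bigotimes_i m_i\nu_i^{(k)}\preceq\mu^1_x$ for every $k$. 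Passing to the limit, using that $m\mu$ and $\mu\otimes\nu$ are monotone in each argument and that a countable supremum of measures all $\preceq\mu^1_x$ is again $\preceq\mu^1_x$, yields $\bigotimes_i m_i\mu^1_{x_i}\preceq\mu^1_x$, which is \eqref{E:cond}. Ergodicity enters only to ensure that the fixed vectors are exactly the constants, so that $B$ is the orthogonal complement to which Theorem~\ref{T:rep} applies.
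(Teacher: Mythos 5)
First, a point of order: the paper does not prove Theorem~\ref{T:last} --- it is imported from \cite{Sol2} --- so there is no in-paper proof to compare yours against. Judged on its own terms, your proposal identifies the correct source of the constraint: the Koopman representation acts by automorphisms of the underlying function algebra, products of isotypic vectors should witness the weight $m_1x_1\oplus\cdots\oplus m_nx_n$, complex conjugation realizes $m_i=-1$, and the chain condition $\mu^{j+1}_x\preceq\mu^j_x$ reduces everything to bounding from below the spectral type of a single vector of $H_x$. That is indeed the strategy behind the theorem, and it explains why the operations $\oplus$, $\otimes$ and $m\mu$ are defined as they are.

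However, the step you yourself flag as ``the heart of the argument'' is exactly where the sketch has genuine gaps. (1) The matrix coefficient $\langle\kappa(g)(\xi_1\cdots\xi_n),\xi_1\cdots\xi_n\rangle=\int\prod_i\kappa(g)(\xi_i)\,\overline{\xi_i}\,d\gamma$ does \emph{not} factor into $\prod_i\langle\kappa(g)\xi_i,\xi_i\rangle$: the $\xi_i$ are arbitrary vectors of the same system, not independent random variables. Since what you need is a \emph{lower} bound on the spectral type of the product, cross-term cancellation is fatal here --- a single product vector can have spectral type strictly smaller than $\bigotimes_i m_i\nu_i$. A correct proof has to work with the whole family of products $\kappa(g_1)(\xi_1)\cdots\kappa(g_n)(\xi_n)$ as the $g_i$ range over the group (equivalently, exhibit an intertwining operator from $\widetilde{L^2}\bigl(\bigotimes_i m_i\nu_i\bigr)$ into $B$); this is where the real work lies and your sketch does not supply it. (2) The product of a weight-$x_1$ vector and a weight-$x_2$ vector is \emph{not} isotypic of weight $x_1\oplus x_2$: restricting to a compact subgroup $\Sbb_n\cong\Tbb^{2^n}$, the product of characters $(k_s)_s$ and $(k'_s)_s$ is $(k_s+k'_s)_s$, and collisions (both $k_s,k'_s\neq 0$ at the same $s$) produce components of entirely different weights --- compare $z_s\cdot z_s=z_s^2$, which has weight $x(2)=1$ rather than $x(1)=2$. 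One must project onto $H_x$ and show the projection still carries all of $\bigotimes_i m_i\nu_i$; the off-diagonal compatibility condition is what makes this true in the limit, but it must be argued, not invoked. Finally, the density of bounded vectors in $H_{x_i}$ is true but not automatic for a closed subspace of $L^2$; it requires noting that averaging over the compact subgroups $\Sbb_n$ preserves $L^\infty$. As it stands, the proposal is a correct roadmap with the decisive estimates missing.
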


In Theorem~\ref{T:last} the coefficients $m_1, \dots , m_n$ come from $\{1,-1\}$.
It is natural to enquire, especially in light of the arguments in \cite{Sol2} and the main theorem of \cite{Ete} 
as reformulated in \cite[Section~4]{Sol2}, if the conclusion 
can be strengthened so that \eqref{E:cond} holds with arbitrary coefficients from $\Zbb^\times$.  
Theorem~\ref{T:koo} implies that this is impossible. 

Indeed, let $(\mu^j_x)_{x,j}$
determine the spectral form of the Koopman 
representation associated with the boolean action of $L^0(\lambda,\Tbb)$ from Section~\ref{Su:act}.
It follows from Theorem~\ref{T:koo} 
that for $m_1, \dots, m_n \in \Zbb^\times$ and $x_1, \dots, x_n\in \Nbb[\Zbb^\times]$, 
condition \eqref{E:cond} holds if and only if either each $m_i = \pm1$ or
else there exists $1\leq i\leq n$ such that $m_i \ne \pm1$ and 
$x_i\not= x_{p,q}$, for all $p,q$.
In the latter case $m_1 \mu^1_{x_1}\otimes \cdots \otimes m_n\mu^1_{x_n}$ is the zero measure.
In particular, for $m\in \Zbb^\times$ we have that 
\[
\big( m \mu^1_{x_{1,0}}\preceq \mu^1_x,\hbox{ for }x= {m x_{1,0}}\big) \,\Longleftrightarrow\,   \big(m \in \{-1, 1\}\big).
\]

% \section{Question} 
%
%Continuing with the notation as in the discussion following Theorem~\ref{T:koo}, we note that, for 
%$p,q\in \Zbb$ with $p,q\geq 0$ and $p+q>0$, we have 
%\[
%x_{p,q}=  \big(x_{1,0}\oplus \cdots \oplus x_{1,0}\big) \oplus \big((-1)x_{1,0}\oplus\cdots\oplus  (-1)x_{1,0}\big)
%\]
%and
%\[
%\mu_{p,q}= \big( \mu_{1,0} \otimes \cdots \otimes \mu_{1,0}\big)\otimes \big((-1)\mu_{1,0} \otimes \cdots \otimes (-1)\mu_{1,0} \big).
%\]
%The following question asks if this kind of identities hold for an arbitrary Gaussian action of $L^0(\lambda, \Tbb)$. 
%
%Let $(\mu^j_x)_{x\in \Nbb[\Zbb^\times], j\in \Nbb}$ determine the spectral form of a unitary representation 
%$\xi$ of $L^0(\lambda, \Tbb)$. Let $(\nu^j_x)_{x\in \Nbb[\Zbb^\times], j\in \Nbb}$ determine the spectral form of 
%the Koopman representation induced by the Gaussian boolean action of $L^0(\lambda, \Tbb)$ obtained from $\xi$. 
%
%\noindent {\bf Question.} Is it the case that, for each $z \in \Nbb[\Zbb^\times]$, we have 
%\[
%\nu^1_z = \sum \big( \mu^1_{x_1}\otimes \cdots \otimes \mu^1_{x_p} \big) \otimes 
%\big( (-1)\mu^1_{y_1}\otimes \cdots \otimes (-1) \mu^1_{y_q} \big), 
%\]
%where the sum is taken over all tuples $x_1, \dots, x_p, y_1, \dots, y_q$ of elements of $\Nbb[\Zbb^\times]$ with 
%\[
%z = x_1\oplus \cdots \oplus x_p\oplus (-1) y_1\oplus \cdots \oplus (-1) y_q\,?
%\]


\begin{thebibliography}{10}
\bibitem{Ete} M.\;Etedadialiabadi, {\em Generic behavior of a measure preserving transformation}, Ergodic Theory Dynam.\;Systems
40 (2020), 904--922. [https://arxiv.org/abs/1711.08703]

\bibitem{Gla} E.\;Glasner, {\em On minimal actions of Polish groups}, 
Topology Appl. 85 (1998), 119--125.

\bibitem{GW} E.\;Glasner, B.\;Weiss, {\em Spatial and non-spatial actions of Polish groups}, Ergodic Theory 
Dynam.\;Systems 25 (2005), 1521--1538. [https://arxiv.org/abs/math/0405352]


\bibitem{Jan} S.\,Janson, {\em Gaussian Hilbert Spaces}, Cambridge Tracts in Mathematics, 129, Cambridge University Press, 1997.


\bibitem{LPT} M.\;Lema{\'n}czyk, F.\;Parreau, J.-P.\;Thouvenot, {\em Gaussian automorphisms whose ergodic self-joinings are Gaussian},  
Fund.\;Math. 164 (2000), 253--293.


\bibitem{MT} J.\;Melleray, T.\;Tsankov, {\em Generic representations of abelian groups and extreme amenability}, 
Israel J.\;Math. 198 (2013), 129--167. [https://arxiv.org/abs/1107.1698]

\bibitem{MoSo} J.\,T.\;Moore, S.\;Solecki, {\em A Boolean action of $C(M,U(1))$ without a spatial model and a re-examination of the Cameron--Martin Theorem}, 
J.\;Funct.\;Analysis 263 (2012), 3224--3234. [https://arxiv.org/abs/1201.3947]

\bibitem{Sol} S.\;Solecki, {\em Unitary representations of the groups of measurable and continuous functions with values in the circle}, 
J.\;Funct.\;Analysis 267 (2014), 3105--3124. [https://arxiv.org/abs/1409.5848]

\bibitem{Sol2} S.\;Solecki, {\em Closed subgroups generated by generic measure 
preserving transformations}, March 2021. [https://arxiv.org/abs/2103.09429]
\end{thebibliography}
\end{document}